\theoremstyle{definition}
\newtheorem{thm}{Theorem} 
\newtheorem{lem}[thm]{Lemma}
\newtheorem{prop}[thm]{Proposition}
\newcommand{\N}{\mathbb{N}}
\newcommand{\nat}{\mathbb{N}} 
\newcommand{\R}{\mathbb{R}}
\newcommand{\abs}[1]{\left\vert #1 \right\vert}	
\newcommand{\norm}[1]{\left\Vert #1 \right\Vert}	
\renewcommand{\a}{\alpha}
\newcommand{\eps}{\varepsilon}
\newcommand\dint{{\rm d}}
\title{On Weak Tractability of the Clenshaw--Curtis Smolyak Algorithm}
\author{Aicke Hinrichs\\
Institut f\"ur Mathematik, Universit\"at Rostock\\
Ulmenstra\ss e 69, Haus 3, 18051 Rostock, Germany\\
email: aicke.hinrichs@uni-rostock.de\\
\qquad
\\
Erich Novak\footnote{This
author was partially supported by the DFG-Priority Program 1324.}, 
Mario Ullrich\footnote{This author was partially supported by 
DFG GRK 1523.}\\
Mathematisches Institut, Universit\"at Jena\\
Ernst-Abbe-Platz 2, 07743 Jena, Germany\\
email: erich.novak@uni-jena.de, 
ullrich.mario@gmail.com}
\begin{document}

\maketitle

\begin{abstract}
We consider the problem of integration of $d$-variate analytic functions 
defined on the unit cube 
with directional derivatives of all
orders bounded by 1.
We prove that the Clenshaw Curtis Smolyak algorithm  
leads to weak tractability of the problem.
This seems to be the first positive tractability result for the Smolyak algorithm 
for a normalized and unweighted problem. 
The space of integrands is not a tensor product space 
and therefore we have to develop a different proof technique. 
We use the polynomial exactness of the algorithm as well as an 
explicit bound on the 
operator norm of the algorithm.
\end{abstract}

\section{Introduction and Result}   \label{sec1}  

We prove that the Clenshaw Curtis Smolyak algorithm is weakly 
tractable for a class of analytic functions. 
Weak tractability of the integration problem for this class
was recently shown in \cite{HNUW13}. In [12]  high
derivatives are approximated by finite differences.
This approximation is very unstable, it does not give
a practical algorithm. In this paper we show with different
proof techniques that the Smolyak algorithm can
be used with essentially the same error bounds. Therefore 
we are now able to give a constructive algorithm, while the
result from \cite{HNUW13} was only a complexity
result.

To explain our result in detail, we say a few words about this algorithm, 
about recent tractability results, and about our proof technique. 

\subsection{The CCS algorithm}

We want to compute
$$
S_d(f) = \int_{[0,1]^d} f(x) \, {\rm d} x
$$
and use the Smolyak algorithm~\cite{Sm63}
in combination with the Clenshaw Curtis algorithm 
as in Novak, Ritter~\cite{NR96,NR97,NR99}, 
see also
Gerstner, Griebel~\cite{GG98} 
and 
Petras~\cite{Pe00}.
We describe the resulting CCS algorithm. 

For $f\colon[0,1]\to\R$ define the sequence of (one-dimensional) 
quadrature rules
\begin{equation} \label{eq:U}
U^\ell(f) \;=\; \sum_{j=1}^{m_\ell} a_j^\ell\, f(x_j^\ell), \quad \ell\in\N, 
\end{equation}
with
\[
m_\ell \;=\; \begin{cases}
1, & \ell=1 \\
2^{\ell-1}+1, & \ell\ge2 . 
\end{cases}
\]
For $\ell=1$ there is only one node $x_1^1=1/2$ with weight $a_1^1=1$.
For $\ell > 1$ we have the 
nodes
\[
x_j^\ell \;=\; \frac12 \left(1-\cos\Bigl(\pi\frac{j-1}{m_\ell-1}\Bigr)\right), 
\qquad j=1,\dots,m_\ell
\]
and weights
\begin{equation} \label{eq:a}
a_j^\ell \;=\; \begin{cases} 
\frac{1}{2m_\ell(m_\ell-2)}, & j=1,m_\ell\\
\frac{1}{m_\ell-1} \left( 1 - \frac{\cos (j-1)\pi}{(m_\ell-1)^2-1}  
				- 2\sum\limits_{k=1}^{\frac{m_\ell-3}{2}} 
\frac{\cos \frac{2k(j-1)\pi}{m_\ell-1}}{4 k^2 -1}\right), & j=2,\dots m_\ell-1.
\end{cases}
\end{equation}
These rules are called Clenshaw-Curtis (CC) quadrature rules. 
It is well known that the CC-rules are positive rules, 
that is $a_j^\ell>0$ for all $j$ and $\ell$, 
see~\cite{BP11}. 

Observe that the nodes of the  $U^\ell$ are nested, since
\[
x_{2j-1}^{\ell+1} = x_j^{\ell}  \qquad \mbox{for }\  j=1,2,\dots m_\ell.
\]

Additionally, we define
\begin{equation} \label{eq:Delta}
\Delta^1 \,=\, U^1 \quad \text{ and } \quad \Delta^\ell \,=\, U^\ell-U^{\ell-1}, 
\quad \ell\ge2.
\end{equation}
Note that, for $f\colon[0,1]\to\R$,  
\[
\Delta^\ell(f) \;=\; \sum_{j=1}^{m_\ell} b_j^\ell\, f(x_j^\ell), \quad \ell\in\N, 
\]
with
\[
b_{j}^\ell \;=\; \begin{cases}
a_j^\ell & \text{ for even } j \\ 
a_j^\ell - a_{\frac{j+1}{2}}^{\ell-1}  & \text{ for odd } j. 
\end{cases}
\]
These weights, except for $\ell=1$, sum up to zero.

Then the Smolyak algorithm (based on the CC rule $U^\ell$) is defined by 
\[
A(q,d) \;=\; \sum_{i\in\N_0^d\colon |i|\le q-d} 
	\Delta^{i_1+1} \otimes\dots\otimes \Delta^{i_d+1}.
\]
Here, the $d$-fold tensor product of the functionals $\Delta^\ell$ 
is given by
\[
     (\Delta^{\ell_1} \otimes\ldots\otimes \Delta^{\ell_d})(f) 
\;=\; \sum_{j_1=1}^{m_{\ell_1}}\ldots \sum_{j_d=1}^{m_{\ell_d}}\,
	b_{j_1}^{\ell_1}\cdot\ldots\cdot b_{j_d}^{\ell_d}\; 
	f(x_{j_1}^{\ell_1},\ldots,x_{j_d}^{\ell_d})
\]
for $f\colon[0,1]^d\to\R$.
Note that we can write this 
CCS algorithm $A(q,d)$, for $q=d+k$, as
\[
A(d+k,d) \;=\; \sum_{\ell=1}^{k+1} A(d+k-\ell,d-1) \otimes \Delta^\ell.
\]
The Smolyak algorithm can also be written as 
$$
A(q,d) = 
\sum_{q-d+1 \le |i| \le q} (-1)^{q-|i|} \cdot 
\binom{d-1}{q-|i| } \cdot
(U^{i_1} \otimes \dots \otimes U^{i_d}) ,
$$
see Wasilkowski and Wo\'zniakowski~\cite[Lemma 1]{WW95}.
Clearly $A(q,d)$ is a linear functional, and $A(q,d)(f)$
depends on $f$ only through function values at a finite
number of points. To describe these points let
$$
X^i = \{ x_1^i, \dots, x^i_{m_i} \} \subset [0,1]
$$
denote the set of nodes of $U^i$. The tensor product
algorithm $U^{i_1} \otimes \dots \otimes U^{i_d}$ is based on
the grid $X^{i_1} \times \dots \times X^{i_d}$, and therefore
$A(q,d)(f)$ depends (at most) on the values of $f$ at points in the union
$$
H(q,d) = \bigcup_{q-d+1 \leq |i| \leq q} 
(X^{i_1} \times \dots \times X^{i_d}) \subset [0,1]^d
$$
of grids. Nested sets $X^i \subset X^{i+1}$
yield $H(q,d) \subset H(q+1,d)$ and
$$ 
H(q,d) = \bigcup_{|i| = q} 
(X^{i_1} \times \dots \times X^{i_d}).
$$ 
Therefore nested sets seem to be the most economical choice. 
The points $x \in H(q,d)$ are called hyperbolic cross points and
$H(q,d)$ is also called a sparse grid.  

In what follows we will bound the number of function values that 
are sufficient for the CCS algorithm to achieve a certain error.
For this we define
\[
N_d(k) \;:=\; \bigl|H(d+k,d)\bigr|
\]
as the number of points used by $A(d+k,d)$.

\subsection{Some known properties of the CCS algorithm} 

Error bounds for the Smolyak algorithm were proved by
Smolyak~\cite{Sm63}, 
Wasilkowski and Wo\'zniakowski~\cite{WW95}
and many others, see also \cite{BG04,Gr06,SU06,SU11}. 
In this paper we always consider the worst case 
error with respect to the unit ball for some norm,
and therefore properties of various norms 
are very relevant.

Most of the known error bounds are for tensor product spaces, i.e., 
one takes norms with 
\begin{equation}    \label{prop-tens} 
\Vert f_1 \otimes f_2 \otimes \dots \otimes f_d \Vert = 
\prod_{i=1}^d \Vert f_i \Vert, 
\end{equation} 
where
$$ 
(f_1 \otimes f_2 \otimes \dots \otimes f_d)(x) = 
\prod_{i=1}^d  f_i(x_i) .
$$ 
We stress that in this paper we do \emph{not} use tensor product norms 
since we use the norm
\begin{equation}   \label{norm} 
\sup_{k \in \N_0}  \sup_{\theta\in S^{d-1}}\, 
\|D^k_\theta f\|_\infty , 
\end{equation} 
where $D_\theta f$ denotes the directional derivative of $f$ in direction 
$\theta$.
Therefore we cannot use the property \eqref{prop-tens} 
and error bounds based on it. 
We want to illustrate this a bit further. 
For
$f_1(x) = x$ on $[0,1]$
clearly all partial derivatives of $f_1 \otimes f_1 \otimes \dots \otimes 
f_1$ are bounded by 1.
Some directional derivatives are larger than 1 and hence, 
for the norm \eqref{norm},  
$$
\Vert f_1 \otimes f_1 \Vert > \Vert f_1 \Vert \cdot \Vert f_1 \Vert. 
$$
This property makes the unit ball with respect to 
the norm \eqref{norm} \emph{smaller} than the unit 
ball of a tensor product space. 

Consider now $C^r([0,1]^d)$ with the standard norm 
\begin{equation}    \label{standard} 
\Vert f \Vert = \max_{|\alpha| \le r} \Vert D^\alpha f \Vert_\infty, 
\end{equation} 
where $\alpha = (\a_1, \dots , \a_d) \in \N_0^d$ is used to denote a 
partial derivative of order $|\alpha | = \a_1 + \dots + \a_d$. 
Then, for $r=1$ and
$f_2(x)=x^2$ on $[0,1]$,
$$
2 = \Vert f_2 \otimes f_2 \Vert < \Vert f_2 \Vert \cdot \Vert f_2 \Vert = 4. 
$$
This property makes the unit balls of $C^k([0,1]^d)$ \emph{larger} than 
the unit balls of tensor product spaces. 
We present  
a result from \cite{NR97} for the CCS algorithm for the order of convergence
and the standard norm \eqref{standard}, hence  the unit ball is 
$$ 
C^r_d = \{ f: [0,1]^d \to \R \mid 
\max_ {|\alpha| \leq r} \Vert D^\alpha  f \Vert_\infty \le 1 \}.
$$

\begin{prop}  \label{prop1} 
For $d, r \in \N$ there exists $c_{r,d}>0$ such that
\begin{equation} 
e(A(q,d),C^r_d) = \sup_{f \in C^r_d} | A(q,d)(f) - S_d(f) | 
\leq  c_{r,d} \cdot N^{-r/d} \cdot(\log N)^{(d-1) \cdot (r/d +1)},
\end{equation} 
where $N=N_d(q-d)$ is the number of function values used by the CCS algorithm. 
\end{prop}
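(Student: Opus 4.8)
The plan is to reduce the $d$-dimensional error bound to a one-dimensional analysis of the building blocks $\Delta^\ell$, exploiting the tensor-product structure of the \emph{algorithm} (even though the function space is not a tensor product). First I would recall the classical one-dimensional fact that the Clenshaw--Curtis rule $U^\ell$ with $m_\ell$ nodes integrates all polynomials of degree $< m_\ell$ exactly, so $\Delta^\ell$ annihilates polynomials of degree $<m_{\ell-1}$; combined with a Peano-kernel / best-approximation argument and the uniform boundedness $\sum_j |b_j^\ell| \le C$ (which follows from positivity of the CC weights, since $\sum_j a_j^\ell = 1$ and $\sum_j |b_j^\ell| \le \sum_j a_j^\ell + \sum_j a_j^{\ell-1} = 2$), one gets $|\Delta^\ell(g)| \le c\, 2^{-r\ell}\, \|g^{(r)}\|_\infty$ for $g \in C^r[0,1]$, and in particular $|\Delta^\ell(g)| \le c\,2^{-r\ell}\,\|g\|_{C^r}$. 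This is the standard ingredient behind Smolyak error estimates and is available in \cite{NR96,NR97}.

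Next I would plug this into the tensor-product representation $A(q,d) - S_d = \sum_{|i| > q-d} \Delta^{i_1+1}\otimes\cdots\otimes\Delta^{i_d+1}$ applied to $f \in C^r_d$. The key point is that for a function $f$ with $\max_{|\alpha|\le r}\|D^\alpha f\|_\infty \le 1$, each factor $\Delta^{i_j+1}$ acting in the $j$-th variable contributes either a boundedness factor (at most $2$) or, when we choose to "spend" smoothness in that variable, a decay factor $c\,2^{-r(i_j+1)}$ times a norm of an $r$-th partial derivative of $f$ in that variable — and such a partial derivative still has sup-norm $\le 1$. Distributing the total smoothness budget $r$ across the $d$ coordinates (say, using exactness of order $\lfloor r/d\rfloor$ or so in each coordinate, or more carefully optimizing) yields a per-term bound of the form $c_{r,d}\, 2^{-(r/d)|i|}$. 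Summing the geometric-type series over $\{i \in \N_0^d : |i| > q-d\}$, where the number of multi-indices with $|i| = n$ grows only polynomially like $n^{d-1}$, gives a bound $c_{r,d}\, (q-d)^{d-1}\, 2^{-(r/d)(q-d)}$.

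Finally I would translate this bound in the level parameter $q-d =: k$ into a bound in $N = N_d(k)$, the number of sampled points. The standard cardinality estimate for sparse grids is $N_d(k) \le c_d\, 2^k\, k^{d-1}$ (the $2^k$ from the one-dimensional node counts $m_\ell \approx 2^{\ell-1}$, the $k^{d-1}$ from the number of active multi-indices). Inverting this, $2^k \gtrsim N/((\log N)^{d-1})$ up to constants, so $2^{-(r/d)k} \le c_{r,d}\, N^{-r/d}\,(\log N)^{(d-1)r/d}$, and the extra factor $k^{d-1} \le c_d (\log N)^{d-1}$ from the term count combines to give the stated exponent $(d-1)(r/d+1)$ on $\log N$. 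This is exactly the claimed estimate with a suitable constant $c_{r,d}$.

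The main obstacle is the middle step: because the norm \eqref{norm} — and already the $C^r_d$ norm \eqref{standard} — controls partial derivatives but the tensorized error operator naturally wants to differentiate "one variable at a time up to order $r$," one must be careful that iterating the one-dimensional Peano estimate coordinate by coordinate only ever invokes partial derivatives $D^\alpha f$ with $|\alpha| \le r$, so that the bound $\|D^\alpha f\|_\infty \le 1$ genuinely applies; getting the bookkeeping of which coordinates "use" smoothness right, and choosing the per-coordinate exactness orders so they sum to $r$ while producing the clean exponent $r/d$, is where the real work lies. The summation over multi-indices and the cardinality inversion are then routine.
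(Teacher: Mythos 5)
Your proposal takes a genuinely different route from the one the paper sketches. The paper's argument has the form
\[
\abs{A(q,d)(f)-S_d(f)} \;\le\; \bigl(1+\norm{A(q,d)}\bigr)\,\inf_{p}\norm{f-p}_\infty ,
\]
with the infimum over polynomials integrated exactly by $A(q,d)$, combined with the norm bound $\norm{A(d+k,d)}\le c_d(\log N)^{d-1}$ and the cardinality estimate \eqref{TMG}. You instead expand the error as the tail $\sum_{|i|>q-d}\Delta^{i_1+1}\otimes\cdots\otimes\Delta^{i_d+1}$ and estimate term by term using the one-dimensional exactness of $U^\ell$. Both can be made to work, and the comparison is instructive: to extract the rate $N^{-r/d}$ from the paper's route one must approximate $f$ from the full Smolyak exactness space, which contains the tensor-product polynomials of coordinate degree roughly $2^{k/d}\sim N^{1/d}$ in each variable; the total-degree-$(2k+1)$ exactness quoted as the paper's ``third fact'' would by itself, via a total-degree Jackson estimate, only give a bound of order $(\log N)^{d-1-r}$, since $k\sim\log_2 N$. (Total-degree exactness is, however, precisely what is needed for the analytic class $F_d$ in Theorem~\ref{thm:main}, which is the ``different emphasis'' the paper alludes to.) Your route avoids both the global exactness result and the norm of $A(q,d)$, at the price of per-term bookkeeping.

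That bookkeeping is the one place where your write-up is not yet a proof, and you have correctly flagged it. Assigning $\lfloor r/d\rfloor$ derivatives to each coordinate fails outright when $r<d$, and any allocation $\alpha$ with $|\alpha|=r$ that is fixed in advance fails on multi-indices $i$ concentrated in a coordinate where $\alpha_j=0$. The standard fix is to adapt the allocation to $i$: spend all $r$ derivatives in one coordinate $j^*$ with $i_{j^*}=\max_j i_j$ and bound the remaining factors by $\norm{\Delta^{i_j+1}}\le 2$. This invokes only $\partial_{j^*}^r f$, whose sup-norm is at most $1$, and gives
\[
\abs{\bigl(\Delta^{i_1+1}\otimes\cdots\otimes\Delta^{i_d+1}\bigr)(f)}
\;\le\; c_r\,2^{\,d-1}\,2^{-r\max_j i_j}
\;\le\; c_{r,d}\,2^{-r|i|/d},
\]
which is exactly the per-term bound $2^{-(r/d)|i|}$ you want. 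From there your summation over $\{|i|=n\}$ with multiplicity $\binom{n+d-1}{d-1}\le c_d\,n^{d-1}$ yields $c_{r,d}\,k^{d-1}2^{-rk/d}$, and the inversion $2^{k}\ge N/(c_d k^{d-1})$ together with $k\le\log_2 N$ gives the stated exponent $(d-1)(r/d+1)$. Two minor points to fix in the details: $\Delta^\ell$ annihilates polynomials of degree up to $m_{\ell-1}-1=2^{\ell-2}$ (exactness of the coarser rule $U^{\ell-1}$), not of degree $<m_\ell$, which only changes constants; and the telescoping identity $S_d(f)=\sum_{i\in\N_0^d}(\Delta^{i_1+1}\otimes\cdots\otimes\Delta^{i_d+1})(f)$ needs the absolute summability that your per-term bounds provide anyway.
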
 

We describe the proof since in this paper we use a similar technique,
but with a different emphasis. 
Observe that Proposition~\ref{prop1} 
contains unknown constants $c_{r,d}$ and hence the error 
bound makes sense only for given $r$ and $d$ and very large $N$ 
or small error $\eps$. For the proof we need three facts. 

First we need 
an estimate of the number $N_d(k)$ of knots that are used by
$A(d+k,d)$. We use $\approx$ to denote the strong equivalence of
sequences, i.e., $v_n \approx w_n$ iff $\lim_{n\to \infty }v_n/w_n=1$.
Then, 
for $k \to \infty$ and fixed $d$,  
\begin{equation}   \label{TMG} 
N_d(k) \approx \frac{1}{(d-1)! \cdot 2^{d-1}} 
\cdot 2^k \, k^{d-1}, 
\end{equation} 
see M\"uller-Gronbach~\cite[Lemma 1]{MG}.

Smolyak's construction leads to cubature formulas with negative
weights, even if positive weights are used in the univariate case.
However, the weights are relatively small in absolute value. 
Hence, secondly, we need a bound for $\norm{A(q,d)}$, 
where $\norm{A(q,d)}$ denotes the sum of the 
absolute values of the weights of $A(q,d)$. 
We use $c_d$ and $c_{r,d}$ to denote different positive constants
depending on $d$ or on $r$ and $d$, respectively.
There exists a constant $c_d>0$ such that
\begin{equation}    \label{norm von CCS} 
\norm{A(d+k,d)} \leq c_d \cdot  \Big(\log \big( N_d(k)\big)\Big)^{d-1} .
\end{equation} 

For a proof observe that
$$
\# \big\{ i \in \N^d \,\big|\, |i| = \ell \big\} = 
\binom{\ell - 1}{d-1}.
$$
Since the Clenshaw-Curtis formulas have positive weights, we
conclude
\begin{equation}  \label{norm2} 
\norm{A(d+k,d)}
\leq c_d \cdot \! \!  \sum_{\ell=k+1}^{d+k} \binom{\ell - 1}{d-1} \leq
c_d \cdot \binom{d+k - 1}{d-1} \leq c_d \cdot (d+k)^{d-1}, 
\end{equation} 
where we can take $c_d = 2^d$. 
Then \eqref{norm von CCS} follows from 
$$
\log N_d(k) \geq c_d \cdot (d+k). 
$$

The third fact that we need is that 
$A(d+k,d)$ is exact for all polynomials of total 
degree at most $2k+1$, see~\cite{NR96,NR99}. \qed 

\medskip 

We add in passing that the estimates 
\eqref{TMG}, \eqref{norm von CCS} and \eqref{norm2} are not 
suitable for tractability studies. 
In particular we cannot use estimates that contain unknown or 
exponentially large constants $c_d$.  

\subsection{The curse of dimensionality} 

\newcommand{\points}{\mathcal{P}} 

We study multivariate integration 
for different classes $F_d$ of smooth functions  
$f \colon [0,1]^d \to \R$.  
Our emphasis is on large values of $d\in\N$. 
We want to approximate 
\begin{equation}   \label{int} 
S_d(f) = \int_{D_d} f(x) \, \dint x \quad \mbox{for} \quad  
f\in F_d 
\end{equation} 
up to some error $\eps >0$.
 
We consider (deterministic)  
algorithms that use only function values, and   
classes $F_d$ of functions bounded in  
absolute value by 1 and containing all constant functions 
$f(x)\equiv c$ with $|c|\le 1$.  
An algorithm that uses no function value at all must be a constant,  
$A_0(f)\equiv b$, and its error is at least 
$$ 
\max_{f\in F_d}|S_d(f)|=1. 
$$ 
We call this the initial error of the problem, it does not depend on $d$.  
Hence multivariate integration is well scaled and that is why 
we consider $\eps<1$.   
 
Let  
$n(\eps,F_d)$  
denote the minimal number of  
function values needed for this task in the worst case  
setting.
By the \emph{curse of dimensionality} we mean that  
$n(\eps,F_d)$ is exponentially large in $d$.  
That is, there are positive numbers $c$, $\eps_0$ and $\gamma$ such that 
\begin{equation}                    \label{curse} 
n(\eps,F_d) \ge c \, (1+\gamma)^d \quad  
\mbox{for all} \quad \eps \le \eps_0  \quad  
\mbox{and infinitely many} \quad d\in \nat.  
\end{equation} 
For many natural classes  $F_d$ the bound in~\eqref{curse} 
will hold for all  
$d\in\nat$. 
There are many classes $F_d$ for which the curse of dimensionality has  
been proved, 
see~\cite{NW08,NW10} for such examples. 

The classes $C^r_d$  
were already studied in 1959 by  
Bakhvalov~\cite{Ba59}, see also~\cite{No88}. 
He proved that there are two positive  
numbers $c_{r,d}$ and $\tilde c_{r,d}$ such that 
\begin{equation}\label{bak1959} 
c_{r,d}\,\eps^{-d/r}\le n(\eps, C^r_d )\le 
\tilde c_{r,d} \,\eps^{-d/r} \quad 
\text{for all}  \ d\in\nat 
\ \text{and} \ \eps\in(0,1). 
\end{equation} 
This means that for a fixed $d$ and for $\eps$ tending to zero,  
we know that $n(\eps, C^r_d )$ is of order $\eps^{-d/r}$  
and the exponent of $\eps^{-1}$  
grows linearly in $d$.  
If we compare this with Proposition~\ref{prop1}  
we may say 
that the CCS algorithm is ``almost optimal'' 
for each class $C^r_d$. In this sense the algorithm is 
``universal''.

Bakhvalov's result does not allow us to 
conclude whether the curse of dimensionality holds for the classes $C^r_d$. 
In fact, if we reverse the roles of $d$ and $\eps$, and consider a fixed $\eps$ 
and~$d$ tending to infinity, the bound~\eqref{bak1959} 
on $n(\eps, C^r_d )$ is useless. 
The curse of dimensionality for the classes $C^r_d$ 
was only recently proved 
in Hinrichs, Novak, Ullrich and Wo\'zniakowski~\cite{HNUW12}. 
 
\begin{prop}                              \label{cor:Lip} 
The curse of dimensionality holds for the classes  
$C^r_d$ with the  
\emph{super-exponen\-tial} lower bound 
\[ 
n(\eps,C^r_d ) \ge c_r\,(1-\eps) \, d^{\,d /(2r+3)} 
\]  
for all $d\in\nat$ and $\eps\in(0,1)$, 
where $c_r\in(0,1]$  
depends only on $r$. 
\end{prop}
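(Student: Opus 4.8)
The plan is to prove the lower bound by an adversary (``fooling'') argument, exploiting that integration on the convex, symmetric class $C^r_d$ does not benefit from adaption. First I would recall the standard reduction: it suffices to show that for every choice of $n$ nodes $x_1,\dots,x_n\in[0,1]^d$, with $n$ below the claimed threshold $c_r(1-\eps)\,d^{d/(2r+3)}$, there is a function $g\in C^r_d$ with $g(x_1)=\dots=g(x_n)=0$ and $S_d(g)\ge 2\eps$. Any deterministic algorithm using (at most) those $n$ evaluations then cannot distinguish $g$ from the zero function, and so errs by at least $\eps$ on one of them; hence $n(\eps,C^r_d)$ exceeds $n$. (For adaptive algorithms one invokes the usual argument that, for a linear functional over a convex symmetric class, adaption is no more powerful than suitable non-adaptive information.) The whole problem is thereby reduced to a constructive one: produce, robustly against any node placement, a smooth function that vanishes on the nodes and yet has a non-negligible integral.

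The crucial difficulty, and the reason real work is needed for $r\ge 1$, is that the usual localized-bump trick collapses. If one splits $[0,1]^d$ into $m^d$ congruent subcubes and puts a bump on a node-free one, then, to keep all partial derivatives of order $\le r$ bounded by $1$, a bump on a cube of side $1/m$ can have height at most of order $m^{-r}$, hence integral at most of order $m^{-(r+1)d}$; requiring this to be $\ge 2\eps$ forces $m^d$ to be bounded independently of $d$. More generally any fooling function of tensor-product (rank-one) form $g=\prod_j g_j$, and in fact any construction whose integral is a $d$-fold product of numbers smaller than $1$, is doomed to be exponentially small. So the construction must keep its integral essentially as a \emph{sum} of $d$ small contributions rather than a product, must place features of width $w$ with height of order $w^r$, and must be robust against adversaries that spread the nodes uniformly in every coordinate (or along the diagonal), configurations which defeat the naive one- or two-term constructions.

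Concretely, I would aim for a super-exponentially large family $\mathcal F\subset C^r_d$ of candidate fooling functions together with a counting argument: arrange that (i) $|\mathcal F|$ is super-exponential in $d$, which essentially forces each member to involve on the order of $d/(2r+3)$ ``active'' coordinates, since few-coordinate families are only polynomially or exponentially large; (ii) every member has integral $\ge 2\eps$, which, because features must be small when $r\ge 1$, requires the active coordinates to enter through univariate functions of small amplitude $A$ that nevertheless carry many zeros (such a function may oscillate with frequency up to $A^{-1/r}$ while keeping its $r$-th derivative $\le 1$), combined additively so that the integral does not collapse; and (iii) any single node $x$ can obstruct — be a nonzero of — only a tiny fraction of $\mathcal F$, so that $n$ times that fraction being less than $1$ guarantees a member vanishing on all nodes. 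The exponent should come out of optimizing the feature width $w$ against (i)--(iii): an active coordinate costs roughly $w^{r+1}$ in integral mass, contributes only $\log(1/w)$ to $\log|\mathcal F|$, and is matched by a node with probability governed by a further power of $w$, so that $2r+3=2(r+1)+1$ is the bookkeeping of these competing exponents; the factor $1-\eps$ should appear from a union bound in which an $\eps$-fraction of configurations may be spoiled by nodes.

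The main obstacle I expect is reconciling (i), (ii), (iii) simultaneously under the rigid $C^r$-ball constraint against a worst-case node set: enlarging $\mathcal F$ and making obstruction rare both push toward many narrow features, whereas keeping each integral $\ge 2\eps$ pushes toward wide features and few of them; and, as the diagonal and uniform-spread adversaries already show, a merely rank-one construction cannot thread this needle at all, since on the diagonal it reduces to a one-dimensional integration problem for which $n$ evaluations suffice down to error of order $n^{-r}$. So the heart of the proof will be a genuinely multivariate, adversary-robust construction, and I would expect the two most delicate technical points to be: the combinatorial estimate bounding how many configurations a single node can spoil (presumably via an arithmetic--geometric-mean inequality using that the features within each coordinate have disjoint supports), and the verification — through the Lagrange (divided-difference) remainder bound — that the constructed $g$ really lies in $C^r_d$, with all relevant derivatives bounded by $1$.
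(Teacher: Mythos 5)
First, a point of comparison: the paper does not prove Proposition~\ref{cor:Lip} at all --- it is quoted verbatim from \cite{HNUW12}, so there is no in-text proof to match your argument against. Judged on its own terms, your text is a research plan rather than a proof, and the gap sits exactly where you yourself locate ``the heart of the proof'': the fooling function is never constructed. Your opening reduction is fine --- for every node set $x_1,\dots,x_n$ one needs $g\in C^r_d$ with $g(x_i)=0$ and $S_d(g)>\eps$, and the $\pm g$ trick disposes of adaptive algorithms for this linear functional --- and your diagnosis of why localized bumps and rank-one tensor constructions fail for $r\ge1$ is correct and genuinely the right obstacle to identify. But items (i)--(iii) are desiderata, not established facts: no family $\mathcal{F}$ is defined, no bound on the fraction of $\mathcal{F}$ that a single node can obstruct is proved, no verification that the members lie in $C^r_d$ is carried out, and the exponent $2r+3$ is produced by ``bookkeeping'' of exponents attached to quantities that have not been specified. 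Nothing in the write-up certifies the claimed lower bound.

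For orientation, the actual proof in \cite{HNUW12} does not use a super-exponential family plus a union bound. For each given node set it builds a \emph{single} fooling function from the Euclidean distance function to (a neighbourhood of) the nodes, composed with a profile and regularized so that all partial derivatives up to order $r$ stay bounded by $1$; this function vanishes near every node and is essentially $1$ outside the union of $n$ Euclidean balls whose radius is dictated by the derivative and smoothing constraints. The super-exponential rate then comes from the super-exponentially small volume of such balls in $[0,1]^d$: $n$ of them cannot delete more than an $\eps$-fraction of the cube's volume unless $n\ge c_r(1-\eps)\,d^{d/(2r+3)}$, which is also where the factor $1-\eps$ enters. This construction is automatically robust against clustered or diagonal node placements --- precisely the adversaries you worry about --- because it is defined through the distance to the whole node set rather than through a fixed combinatorial family. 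If you want to complete your own route, you must actually exhibit $\mathcal{F}$, prove the obstruction estimate, and verify the $C^r$ bounds; as written, the key construction is missing.
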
 
 
One may say that the classes $C^r_d$ are too large and therefore 
we obtain the curse of dimensionality. 
Therefore it is natural to study smaller classes such as the 
unit balls $F_d$ with respect to the norm \eqref{norm}. 
In 
Hinrichs, Novak, Ullrich and Wo\'zniakowski~\cite{HNUW13}
we prove 

\begin{prop}                              \label{weak}
The curse of dimensionality does not hold for the classes  $F_d$
since the problem is weakly tractable, i.e., 
$$
\lim_{d+\eps^{-1} \to \infty} 
\frac{\log (n(\eps, F_d ))}{d+\eps^{-1}} = 0.
$$
\end{prop}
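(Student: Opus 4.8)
The plan is to prove the stronger statement that the Clenshaw--Curtis Smolyak algorithm itself already witnesses weak tractability. Fix $d\in\N$ and $\eps\in(0,1)$. We shall choose a parameter $k=k(d,\eps)$, prove $e(A(d+k,d),F_d)\le\eps$, and conclude $n(\eps,F_d)\le N_d(k)$; it then remains to check that $\log N_d(k)=o(d+\eps^{-1})$ as $d+\eps^{-1}\to\infty$. The three tools are those used for Proposition~\ref{prop1} --- polynomial exactness of $A(d+k,d)$, a bound on $\|A(d+k,d)\|$, and a bound on $N_d(k)$ --- but they must now be combined with a polynomial approximation of $f\in F_d$ designed for the non-tensor-product norm \eqref{norm}, and the last two bounds must be obtained \emph{without} constants that blow up with $d$, the estimates \eqref{norm von CCS}, \eqref{norm2}, \eqref{TMG} being useless for that purpose as already noted. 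The approximation is Taylor's: let $x_0=(\tfrac12,\dots,\tfrac12)$ and, for $f\in F_d$, let $P_nf(x)=\sum_{|\alpha|\le n}\frac{D^\alpha f(x_0)}{\alpha!}(x-x_0)^\alpha$ be the Taylor polynomial of degree $n$ about $x_0$. Applying the one-variable Taylor formula with Lagrange remainder to $g(t)=f(x_0+t(x-x_0))$ on $[0,1]$ and noting $g^{(m)}(t)=\|x-x_0\|_2^{\,m}\,(D^m_\theta f)(x_0+t(x-x_0))$ with $\theta=(x-x_0)/\|x-x_0\|_2\in S^{d-1}$, the bound $\|D^{n+1}_\theta f\|_\infty\le1$ built into \eqref{norm} gives, uniformly in $x\in[0,1]^d$,
\[
\bigl|f(x)-P_nf(x)\bigr|\;\le\;\frac{\|x-x_0\|_2^{\,n+1}}{(n+1)!}\;\le\;\frac{(\sqrt d/2)^{\,n+1}}{(n+1)!}.
\]
It is exactly here that membership in $F_d$ (rather than in $C^r_d$ or a tensor product ball) is used: a pointwise bound on partial derivatives would not control the remainder $g^{(n+1)}$.

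Next we estimate the error. Take $n$ odd and $k=(n-1)/2$, so $A(d+k,d)$ is exact for polynomials of total degree $\le 2k+1=n$ and hence $A(d+k,d)(P_nf)=S_d(P_nf)$. Since $|S_d(h)|\le\|h\|_\infty$ and $|A(d+k,d)(h)|\le\|A(d+k,d)\|\,\|h\|_\infty$ for every continuous $h$ on $[0,1]^d$,
\[
e(A(d+k,d),F_d)=\sup_{f\in F_d}\bigl|A(d+k,d)(f-P_nf)-S_d(f-P_nf)\bigr|\le\bigl(\|A(d+k,d)\|+1\bigr)\frac{(\sqrt d/2)^{\,n+1}}{(n+1)!}.
\]
For the operator norm we use the Wasilkowski--Wo\'zniakowski representation of $A(d+k,d)$ together with the positivity of the Clenshaw--Curtis weights, which gives $\|U^{i_1}\otimes\dots\otimes U^{i_d}\|=1$; grouping the indices $i\in\N^d$ by $|i|=\ell$ and substituting $j=d+k-\ell$,
\[
\|A(d+k,d)\|\le\sum_{j=0}^{k}\binom{d-1}{j}\binom{d+k-1-j}{k-j}\le\sum_{j=0}^{k}\frac{(d+k)^k}{j!\,(k-j)!}=\frac{\bigl(2(d+k)\bigr)^k}{k!}\le\Bigl(\frac{2e(d+k)}{k}\Bigr)^{k},
\]
a bound free of any constant depending on $d$.

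For the number of nodes, nestedness gives $H(d+k,d)=\bigcup_{i\in\N_0^d,\,|i|=k}X^{i_1+1}\times\dots\times X^{i_d+1}$; a coordinate with $i_j=0$ contributes the single node of $U^1$, while $m_{i_j+1}=2^{i_j}+1\le 2^{\,i_j+1}$ for $i_j\ge1$ and at most $k$ coordinates are nonzero, so each product in the union has at most $2^{2k}$ points and
\[
N_d(k)\le 2^{2k}\binom{d+k-1}{k}\le\Bigl(\frac{4e(d+k)}{k}\Bigr)^{k},\qquad\text{whence}\qquad\log N_d(k)\le k\,\log\frac{4e(d+k)}{k}.
\]
Now set $k=\bigl\lceil c_1 d^{2/3}+\log_2(2/\eps)\bigr\rceil$ with $c_1$ a suitably large absolute constant, and $n=2k+1$. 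Combining the two previous displays with Stirling's inequality, the error is at most $2\bigl(c_0\,d(d+k)\,k^{-3}\bigr)^{k}$ for an absolute constant $c_0$; taking $c_1$ large enough that $k\ge c_1 d^{2/3}$ forces $c_0\,d(d+k)\,k^{-3}\le\tfrac12$, and since also $k\ge\log_2(2/\eps)$, we obtain $e(A(d+k,d),F_d)\le 2^{1-k}\le\eps$ and therefore $n(\eps,F_d)\le N_d(k)$. A short case analysis (according to which of the two terms dominates $k$) in the displayed bound for $\log N_d(k)$ then yields
\[
\log n(\eps,F_d)\le\log N_d(k)=O\!\bigl(d^{2/3}\log d\bigr)+O\!\bigl(\log(1/\eps)\,\log\log(1/\eps)\bigr)=o(d)+o(\eps^{-1}),
\]
using $d^{2/3}\log d=o(d)$ and $t\log\log t=o(t)$. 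Hence $\log n(\eps,F_d)/(d+\eps^{-1})\to0$ as $d+\eps^{-1}\to\infty$, which is the assertion.

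The main obstacle is the balancing in the last step: the only dimension-uniform estimate available for $\|A(d+k,d)\|$ behaves like $(d/k)^{k}$, so killing the Taylor remainder forces $k$ to be of order $d^{2/3}$ (rather than the $\sqrt d$ one might optimistically hope for), and one must then verify that $\log N_d(k)$, of order $k\log\frac{d+k}{k}$, is nonetheless of strictly smaller order than $d+\eps^{-1}$ --- which works only because $d^{2/3}\log d=o(d)$. A secondary but indispensable point is that the textbook estimates \eqref{norm von CCS}, \eqref{norm2}, \eqref{TMG}, whose constants grow exponentially in $d$, must be replaced by the explicit dimension-free bounds of the two middle steps.
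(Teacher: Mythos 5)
Your argument is correct, and its skeleton---Taylor expansion about the centre controlled by the directional-derivative norm, polynomial exactness of degree $2k+1$, a dimension-free bound on $\|A(d+k,d)\|$, a dimension-free bound on $N_d(k)$, and the choice $k\asymp\max\{d^{2/3},\log(1/\eps)\}$---is exactly the skeleton of the paper's proof, which establishes the quantitative Theorem~\ref{thm:main} and reads off Proposition~\ref{weak} from it. The one place where you genuinely diverge is the operator-norm bound, which is where the paper spends most of its technical effort: it first computes $\|\Delta^\ell\|=1+1/(4^{\ell-1}-1)$ exactly via a Fourier-series analysis of the signs of the Clenshaw--Curtis weights (Lemma~\ref{lemma:Delta}) and then runs an induction on $d$ to obtain $\|A(d+k,d)\|\le\binom{d+k}{d}$ (Proposition~\ref{prop:norm}). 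You instead start from the inclusion--exclusion representation $A(q,d)=\sum(-1)^{q-|i|}\binom{d-1}{q-|i|}\,U^{i_1}\otimes\dots\otimes U^{i_d}$ and use that each tensor product of positive rules has norm $1$; this is the paper's own estimate \eqref{norm2}, but carried out without discarding the factor $\binom{d-1}{j}$ into the useless constant $c_d=2^d$---pairing it with the count $\binom{d+k-1-j}{d-1}$ of multi-indices turns the $2^{d}$ into a harmless $2^{k}$ and yields $\|A(d+k,d)\|\le(2e(d+k)/k)^k$. That is weaker than the paper's bound by a factor $2^k$ but entirely sufficient here, and it lets you bypass the delicate sign analysis of Lemma~\ref{lemma:Delta} altogether, a real simplification for the purposes of Proposition~\ref{weak} (the paper's sharper constant is what buys the clean explicit statement of Theorem~\ref{thm:main}). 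Two cosmetic repairs: the justification ``$t\log\log t=o(t)$'' at the end should read $\log(1/\eps)\cdot\log\log(1/\eps)=o(\eps^{-1})$, and the prefactor you call $2$ in the error estimate is really $2\cdot de^2/(16k^2)$, which is indeed at most $2$ once $k\ge c_1 d^{2/3}$ with $c_1\ge1$, so no harm is done.
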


This means that, for a fixed $\eps$, the complexity 
of integration is sub-exponential in the dimension. 
Unfortunately, the proof of Proposition~\ref{weak} 
in \cite{HNUW13} 
is rather theoretical, we use a very unstable 
algorithm 
which is based on the approximation of high derivatives by
function values via finite differences, 
see also Vyb\'iral~\cite{Vyb13}. 
This algorithm 
could not be implemented
because of huge rounding errors. 
The aim of this paper is to give a much more constructive 
proof of Proposition~\ref{weak} by means of the CCS algorithm, 
see Theorem~\ref{thm:main}.  

\subsection{Main result} 

Let
\begin{equation} \label{eq:class}
F_d = \Bigl\{ f\in C^\infty([0,1]^d) \mid 
\sup_{k \in \nat_0}  \sup_{\theta\in S^{d-1}}\, 
	\|D^k_\theta f\|_\infty\le 1 \Bigr\}, 
\end{equation}
where $D_\theta f$ denotes the directional derivative of $f$ in direction 
$\theta$.

\begin{thm} \label{thm:main}
For each $d\in\N$ and $\eps\in(0,1]$ define 
\[
k_{\eps,d} \;:=\; 
\biggl\lceil\max\Bigl\{3\,d^{2/3},\; \ln(1/\eps)\Bigr\} \biggr\rceil .
\]
Then,
\[
e(k_{\eps,d}, d) \;:=\; 
\sup_{f\in F_d} \abs{A\bigl(d+k_{\eps,d},d\bigr)(f) \,-\,  S_d(f)} \;\le\; \eps
\]
and the number of function values $N_d(k_{\eps,d})$ used by 
the CCS algorithm 
$A(d+k_{\eps,d},d)$ satisfies
\[
N_d(k_{\eps,d}) \;\le\; 2\,\exp\Bigl\{k_{\eps,d}\, 
	\Bigl( 2 + \ln\Bigl( 1- d/\ln(\eps) \Bigr) \Bigr)\Bigr\}.
\]
\end{thm}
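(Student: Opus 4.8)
The plan is to combine three ingredients: polynomial exactness of the CCS algorithm, an explicit (constant-free) bound on its operator norm $\norm{A(d+k,d)}$, and a quantitative estimate for how well functions in $F_d$ are approximated by multivariate polynomials. For the error bound, I would start from the standard estimate $|A(q,d)(f)-S_d(f)| \le (1+\norm{A(q,d)}+\norm{S_d})\cdot \mathrm{dist}_\infty(f,\mathcal{P}_m)$, valid whenever $A(q,d)$ and $S_d$ agree on the polynomial space $\mathcal{P}_m$ of total degree $\le m$; here $\mathrm{dist}_\infty$ is the sup-norm distance on $[0,1]^d$. By the third fact recalled after Proposition~\ref{prop1}, $A(d+k,d)$ is exact on $\mathcal{P}_{2k+1}$, so I may take $m=2k+1$. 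Since $\norm{S_d}=1$, what remains is (a) an explicit bound on $\norm{A(d+k,d)}$ that does not hide exponential-in-$d$ constants, and (b) an explicit bound on $\mathrm{dist}_\infty(f,\mathcal{P}_{2k+1})$ for $f\in F_d$.

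For (a), the argument sketched in \eqref{norm2} already gives $\norm{A(d+k,d)} \le \sum_{\ell=k+1}^{d+k}\binom{\ell-1}{d-1}$ using only positivity of the CC weights and the nesting telescoping; the point is to keep this as a binomial sum rather than bounding it by $2^d(d+k)^{d-1}$. Using the hockey-stick identity this equals $\binom{d+k}{d} - \binom{k}{d} \le \binom{d+k}{d}$, and for the range $k \gtrsim d^{2/3}$ one can bound $\binom{d+k}{d}$ subexponentially in $d+k$ via Stirling. For (b), this is the genuinely novel step and I expect it to be the main obstacle: because $F_d$ is defined through directional derivatives of all orders bounded by $1$, a function $f\in F_d$ is, along every line segment in $[0,1]^d$, the restriction of an entire function of exponential type $1$ with all derivatives bounded by $1$. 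The one-dimensional Jackson/Bernstein-type estimate then gives that the best degree-$m$ polynomial approximation of such a univariate restriction has sup-norm error at most something like $\frac{(\sqrt d/2)^{m+1}}{(m+1)!}$ (the diameter of the cube in the relevant direction enters), and a tensor/Chebyshev argument lifts this to a genuinely $d$-variate polynomial of total degree $m$ with $\mathrm{dist}_\infty(f,\mathcal{P}_m) \le C\,\frac{(\sqrt d/2)^{m+1}}{(m+1)!}$ or a similar bound. With $m=2k+1$ and $k = k_{\eps,d} \ge 3d^{2/3}$, Stirling makes $(m+1)!$ dominate $(\sqrt d)^{m+1}$ comfortably, forcing this distance below roughly $e^{-k} \le \eps$ after absorbing the polynomially-growing factor $\norm{A(d+k,d)}$ — here the choice $k \ge 3d^{2/3}$ is exactly what is needed so that the super-exponential decay of $1/(m+1)!$ beats both $(\sqrt d)^{m+1}$ and the polynomial norm growth uniformly in $d$.

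For the cardinality bound, I would invoke \eqref{TMG} only heuristically and instead use a clean explicit inequality: $N_d(k) = |H(d+k,d)| \le \sum_{|i|=d+k}\prod m_{i_j}$, and since $m_\ell \le 2^{\ell-1}+1 \le 2^\ell$ one gets $\prod m_{i_j} \le 2^{|i|} = 2^{d+k}$ while the number of terms is $\binom{d+k-1}{d-1}$. A more careful count (using the sparse-grid structure and the fact that added nodes per level roughly double) yields $N_d(k) \le 2\binom{d+k}{d}2^k$ or the sharper form $N_d(k)\le 2\exp\{k(2+\ln(1-d/\ln\eps))\}$ stated in the theorem, where the term $\ln(1-d/\ln\eps) = \ln(1 + d/\ln(1/\eps))$ accounts for $\binom{d+k}{k}^{1/k}\approx e\,(1+d/k)$ and $k \approx \ln(1/\eps)$ in the regime where the first term in the max defining $k_{\eps,d}$ is not active; when $3d^{2/3}$ dominates one has $d/k \le d^{1/3}/3$ so the exponent is $O(k\ln d)=o(d+\eps^{-1})$, which is what ultimately gives weak tractability. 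The main work, and the part I would spend the most care on, is proving the explicit $d$-variate polynomial approximation rate for $F_d$ with the right dependence on $d$ — everything else is bookkeeping with binomial coefficients and Stirling's formula.
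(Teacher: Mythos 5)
Your overall architecture (polynomial exactness of degree $2k+1$, an explicit norm bound for $A(d+k,d)$, a polynomial approximation estimate for $F_d$, and the Wasilkowski--Wo\'zniakowski point count) is exactly the paper's. Your steps (b) and (c) are essentially sound, though you misjudge where the difficulty sits: the bound $\inf_{p\in\mathcal{P}_{m}}\|f-p\|_\infty\le \frac{(\sqrt d/2)^{m+1}}{(m+1)!}$ is obtained in the paper in a few lines from the Taylor polynomial of $f$ about the centre $(1/2,\dots,1/2)$, using $\|x-x^*\|_2\le\sqrt d/2$ and the bound on directional derivatives; no Jackson/Bernstein machinery or ``lifting'' is needed, since the Taylor polynomial is already a $d$-variate polynomial of total degree $m$. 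This is the easy part, not the main obstacle.

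The genuine gap is in your step (a). The argument behind \eqref{norm2} does \emph{not} give $\norm{A(d+k,d)}\le\sum_{\ell=k+1}^{d+k}\binom{\ell-1}{d-1}$. Writing $A(q,d)=\sum_{q-d+1\le|i|\le q}(-1)^{q-|i|}\binom{d-1}{q-|i|}(U^{i_1}\otimes\cdots\otimes U^{i_d})$ and using that each positive tensor-product rule has norm $1$ yields $\norm{A(d+k,d)}\le\sum_{\ell=k+1}^{d+k}\binom{d-1}{d+k-\ell}\binom{\ell-1}{d-1}$; the coefficients $\binom{d-1}{d+k-\ell}$ are precisely where the paper's constant $c_d=2^d$ comes from, and you have silently dropped them. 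Even evaluated carefully, this double sum exceeds $\binom{d+k}{d}$ in general (for $d=3$, $k=1$ it equals $5$ while $\binom{4}{3}=4$; asymptotically it costs roughly a factor $2^k$), so the clean bound $\norm{A(d+k,d)}\le\binom{d+k}{d}$ is not a consequence of positivity plus the hockey-stick identity. The paper's proof of this bound, Proposition~\ref{prop:norm}, is the technical core of the article: one first computes $\|\Delta^1\|=1$, $\|\Delta^2\|=2/3$ and $\|\Delta^\ell\|=1+1/(4^{\ell-1}-1)$ for $\ell\ge3$ exactly (Lemma~\ref{lemma:Delta}, which requires determining the signs of the weights $b_j^\ell$ via the Fourier expansion of $|\sin\pi x|$), and then runs an induction on $d$ through the recursion $A(d+1+k,d+1)=\sum_{\ell=1}^{k+1}A(d+1+k-\ell,d)\otimes\Delta^\ell$, exploiting crucially that the deficit $1-\|\Delta^2\|=1/3$ compensates the excesses $\sum_{\ell\ge3}(\|\Delta^\ell\|-1)\le 1/3$. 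None of this appears in your proposal. The damage is quantitatively repairable --- an extra factor $2^k$ could be absorbed into the final estimate $e(k,d)\le\frac{1}{\sqrt{\pi k}}\bigl(\frac{e^3 d}{8k^2}\max\{1,d/k\}\bigr)^k$ with $k\ge3d^{2/3}$ --- but you neither notice the loss nor carry out that repair, so as written you have no valid derivation of an explicit, $c_d$-free norm bound, which is the one ingredient that makes the tractability result possible.
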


This shows, in particular,  that the 
problem of integration for $F_d$ is weakly tractable 
and that the CCS algorithm is weakly tractable 
for these classes. 

One may argue that also the CCS algorithm is ``mildly unstable'' 
and one would prefer an algorithm with small  
operator norm, such as 
a              cubature formula with positive weights that add up to 1.
Indeed, we prove an
analogue of Theorem \ref{thm:main} with a better 
dependence of the number of nodes.

\begin{thm} \label{thm:qmc}
For each $d\in\N$ and $\eps\in(0,1]$ define 
\[
k^*_{\eps,d} \;:=\; 
\biggl\lceil\max\Bigl\{4\,d^{1/2},\; \ln(1/\eps)\Bigr\} \biggr\rceil .
\]
Then there exists a           cubature rule $Q\bigl(k^*_{\eps,d},d\bigr)$ 
with positive weights that add up to 1 with error 
\[
e(k^*_{\eps,d}, d) \;:=\; 
\sup_{f\in F_d} \abs{Q\bigl(k^*_{\eps,d},d\bigr)(f) \,-\,  S_d(f)} \;\le\; \eps
\]
and the number of function values $N^*_d(k^*_{\eps,d})$ 
used by $Q\bigl(k^*_{\eps,d},d\bigr)$ satisfies
\[
N^*_d(k^*_{\eps,d}) \;\le\; \exp\Bigl\{k^*_{\eps,d}\, 
	\Bigl( 1 + \ln\Bigl( 1- d/\ln(\eps) \Bigr) \Bigr)\Bigr\}.
\]
\end{thm}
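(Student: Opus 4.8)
The plan to prove Theorem~\ref{thm:qmc} is to follow the same scheme as in the proof of Proposition~\ref{prop1}, namely to combine polynomial exactness with a bound on the operator norm of the rule, but to discard the Smolyak functional $A(q,d)$ (whose weights are not all positive) in favour of a genuinely positive cubature formula of the appropriate polynomial degree, produced by a classical existence theorem. Write $c=(1/2,\dots,1/2)$ for the centre of the cube and, for $f\in F_d$ and $m\in\N$, let $p_m$ be the Taylor polynomial of $f$ of degree $m$ about $c$. Applying the one–dimensional Taylor formula with Lagrange remainder to $t\mapsto f\bigl(c+t(x-c)\bigr)$, and using the homogeneity $D^{m+1}_{v}f=\norm{v}_2^{\,m+1}\,D^{m+1}_{v/\norm{v}_2}f$ together with $\norm{D^{m+1}_\theta f}_\infty\le1$ for $\theta\in S^{d-1}$ and $\norm{x-c}_2\le\sqrt d/2$ on $[0,1]^d$, I get
\[
\norm{f-p_m}_\infty \;\le\; \frac{(\sqrt d/2)^{m+1}}{(m+1)!}.
\]
This is the same approximation bound that underlies Theorem~\ref{thm:main}.

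The new ingredient is the second step: by Tchakaloff's theorem — a consequence of Carath\'eodory's theorem applied to the moment cone of Lebesgue measure on the compact set $[0,1]^d$ — for every $m\in\N$ there is a cubature rule with nodes in $[0,1]^d$, exact for all polynomials of total degree at most $m$, using at most $\binom{d+m}{d}$ nodes (the dimension of that polynomial space), and with \emph{positive} weights; testing on the constant $1$ shows these weights sum to $S_d(1)=1$. I would set $m:=k^*_{\eps,d}$ and let $Q\bigl(k^*_{\eps,d},d\bigr)$ be this rule, so that $N^*_d(k^*_{\eps,d})\le\binom{d+k^*_{\eps,d}}{d}$. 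For the error, exactness gives $Q(p_{k^*})=S_d(p_{k^*})$, hence for $f\in F_d$
\[
\abs{Q(f)-S_d(f)} \;=\; \abs{Q(f-p_{k^*})-S_d(f-p_{k^*})} \;\le\; \bigl(\norm{Q}+1\bigr)\,\norm{f-p_{k^*}}_\infty \;=\; 2\,\norm{f-p_{k^*}}_\infty \;\le\; \frac{2\,(\sqrt d/2)^{k^*+1}}{(k^*+1)!}.
\]
Using $(k^*+1)!\ge(k^*/e)^{k^*+1}$ and $k^*_{\eps,d}\ge4\sqrt d$ this is at most $2(e/8)^{k^*+1}$, and the elementary inequality $(e/4)(e^2/8)^{k^*}\le1$ (valid since $e^2/8<1$ and $e/4<1$) gives $2(e/8)^{k^*+1}\le e^{-k^*}\le\eps$, the last step by $k^*_{\eps,d}\ge\ln(1/\eps)$. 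For the node count, $\binom{d+k^*}{d}\le(d+k^*)^{k^*}/k^*!\le\bigl(e(d+k^*)/k^*\bigr)^{k^*}=\exp\{k^*(1+\ln(1+d/k^*))\}$, and $k^*_{\eps,d}\ge\ln(1/\eps)$ yields $1+d/k^*\le1+d/\ln(1/\eps)=1-d/\ln(\eps)$, which is exactly the stated bound.

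The only step that is not quite routine is the second one: one has to recall (or reprove via Carath\'eodory) that positive cubature formulas with as few as $\binom{d+m}{d}$ nodes exist — this existence is non-constructive, which is the price for positivity and for the improved node count compared with Theorem~\ref{thm:main}. After that the remaining work is bookkeeping, but it is bookkeeping that pins down the statement: the constant $4$ in $4\,d^{1/2}$ is forced by the requirement $e\sqrt d/(2k^*)\le e/8$, i.e.\ $e^2/(2\cdot4)<1$, needed to convert the factorial decay into the bound $e^{-k^*}$, while taking the exactness degree to be exactly $k^*_{\eps,d}$ (rather than, say, the degree $2k+1$ natural for the Smolyak rule, which would roughly double the exponent) is what makes $\binom{d+k^*}{d}$ match the target $\exp\{k^*(1+\ln(1-d/\ln\eps))\}$.
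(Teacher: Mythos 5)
Your proof is correct and follows essentially the same route as the paper: a Tchakaloff-type positive cubature rule of exactness $k^*_{\eps,d}$ with at most $\binom{d+k^*}{d}$ nodes and norm $1$, combined with the Taylor-polynomial approximation bound $\norm{f-p_m}_\infty\le(\sqrt d/2)^{m+1}/(m+1)!$ (the content of Proposition~\ref{prop:poly}) and the same elementary estimates $e^2<8$ and $k^*\ge\ln(1/\eps)$. The only cosmetic difference is that you invoke the non-constructive Carath\'eodory version of Tchakaloff's theorem, whereas the paper cites Davis's constructive variant \cite{Dav67}; the resulting rule, node count, and error bound are identical.
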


The proof is based on a constructive version of Tchakalov's Theorem 
due to Davis, see \cite{Dav67}. However, to construct these cubature formulas, one
has to solve exponentially (in $d$) many linear systems of equations, 
each having exponentially many unknowns.
So these methods can be applied only for small $d$.
In contrast, the CCS Smolyak algorithm can be easily implemented.

\subsection{Related results and open problems} 

\begin{itemize} 

\item 
Considering the above remarks about the relation of Theorems
\ref{thm:main} and \ref{thm:qmc}, a natural question is
whether the weak tractability 
of integration for $F_d$ can be proved with a 
positive cubature formula 
which can be efficiently constructed. 
Additionally, we pose the same question for QMC algorithms, 
i.e.~positive cubature formulas with equal weights. 
For recent surveys on QMC algorithms see \cite{DKS,DP}. 

\item
The classes 
$$
F^d = 
\{ f: [0,1]^d \to \R \mid 
\Vert D^\alpha f \Vert_\infty \le 1 \ \hbox{for all} \ 
\alpha \in \nat_0^d   \} 
$$
were studied several times in the literature,
also for the $L_p$ approximation problem, see 
\cite{HNUW12,HNUW13,HZ07,NW08,NW09b,NW10,Wei,Woj03}. 
Here we only mention that $F_d$ from this paper is smaller 
than $F^d$ and it is still \emph{not} known whether integration 
is weakly tractable for the classes $F^d$. 

\item
We do not know whether integration for the classes $F_d$ 
from \eqref{eq:class}
is uniformly weakly tractable. See Siedlecki~\cite{Sied12} 
for this stronger notion of tractability.  

\item
There is an algorithm for the approximation problem
that uses the same sparse grid $H(q,d)$ as well as interpolation by 
polynomials, see \cite{BNR98}. 
This algorithm is often applied, see, e.g., \cite{BNT} 
and one may ask about tractability properties of this algorithm. 
We do not know whether the $L_p$ approximation problem for the classes $F_d$ 
from \eqref{eq:class}
is weakly tractable and, in 
particular, whether the weak tractability follows from properties 
of the Smolyak algorithm. 

\item
The Smolyak algorithm was generalized to the weighted tensor 
product algorithm by Wasilkowski and Wo\'zniakowski~\cite{WWW1,WWW2} 
and these authors also proved tractability results for weighted 
tensor product problems; see also \cite{NW10}. 

\end{itemize} 

\section{The proof}

We start with computing the norms of $\Delta^\ell$ 
and note that a similar (slightly weaker) result 
was already proved by Petras~\cite{Pe00}.  

\begin{lem}         \label{lemma:Delta}
For $\Delta^\ell$ from \eqref{eq:Delta} we have 
\[
\|\Delta^1\|=1, \quad \|\Delta^2\|=\frac23\quad \text{ and }\quad 
\|\Delta^\ell\|=1+\frac1{4^{\ell-1}+1}
\]
for $\ell\ge3$.
\end{lem}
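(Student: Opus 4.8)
The plan is to compute $\|\Delta^\ell\|=\sum_{j=1}^{m_\ell}\abs{b_j^\ell}$ by hand, splitting off the small cases and then using the dyadic nesting of the Clenshaw--Curtis nodes for $\ell\ge 3$. The cases $\ell=1,2$ are bookkeeping: $\Delta^1=U^1$ has the single weight $a_1^1=1$; and for $\ell=2$ the node $x_1^1=\tfrac12$ of $U^1$ is the central node $x_2^2$ of $U^2$ while the two endpoints are new, so $\Delta^2$ has weights $a_1^2,\ a_2^2-a_1^1,\ a_3^2=\tfrac16,-\tfrac13,\tfrac16$ (using $a_1^2=a_3^2=\tfrac16$ and $a_2^2=\tfrac23$ from \eqref{eq:a}), giving $\|\Delta^2\|=\tfrac23$.

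For $\ell\ge3$ write $n:=m_\ell-1=2^{\ell-1}$, which is even. By $x_{2i-1}^\ell=x_i^{\ell-1}$ the odd-indexed nodes of $U^\ell$ are precisely the nodes of $U^{\ell-1}$ and the even-indexed nodes $x_2^\ell,\dots,x_n^\ell$ are new, so $b_j^\ell=a_j^\ell>0$ for even $j$ while $b_j^\ell=a_j^\ell-a_{(j+1)/2}^{\ell-1}$ for odd $j$. The decisive step is to show $b_j^\ell\le 0$ for every odd $j$, i.e.\ that the Clenshaw--Curtis weight at a node does not increase when one refines to the next rule of the dyadic family ($a_j^\ell\le a_{(j+1)/2}^{\ell-1}$). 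For the two endpoints this is immediate from $a_1^\ell=\tfrac1{2(n^2-1)}$; for the interior odd nodes it comes from a short comparison of the two instances of \eqref{eq:a} with parameters $n$ and $n/2$ at the shared angle $2\pi(i-1)/n$ --- essentially the computation behind the estimate in Petras~\cite{Pe00}. Once this is known, and since the weights of each $U^m$ sum to $1$,
\[
\|\Delta^\ell\|
=\sum_{j\text{ even}}a_j^\ell+\sum_{j\text{ odd}}\bigl(a_{(j+1)/2}^{\ell-1}-a_j^\ell\bigr)
=\sum_{j\text{ even}}a_j^\ell+\Bigl(1-\sum_{j\text{ odd}}a_j^\ell\Bigr)
=2\sum_{j\text{ even}}a_j^\ell .
\]

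To finish I would evaluate $\sum_{j\text{ even}}a_j^\ell$. For even $j$ the index $j-1$ is odd, so $\cos\bigl((j-1)\pi\bigr)=-1$ in \eqref{eq:a}; and for each fixed $k\in\{1,\dots,n/2-1\}$ the sum $\sum_{j\text{ even}}\cos\bigl(2k(j-1)\pi/n\bigr)$ vanishes, since writing $j=2i$ it is the real part of $e^{-2\pi\mathrm{i}k/n}\sum_{i=1}^{n/2}z^i$ with $z=e^{4\pi\mathrm{i}k/n}$, $z^{n/2}=1\neq z$. Hence $\sum_{j\text{ even}}a_j^\ell=\tfrac1n\cdot\tfrac n2\bigl(1+\tfrac1{n^2-1}\bigr)=\tfrac12\bigl(1+\tfrac1{n^2-1}\bigr)$, so that $\|\Delta^\ell\|=2\sum_{j\text{ even}}a_j^\ell=1+\tfrac1{n^2-1}=1+\tfrac1{4^{\ell-1}-1}$ (note $(m_\ell-1)^2-1=4^{\ell-1}-1$). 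The routine ingredients are this trigonometric simplification and the $\ell\le2$ bookkeeping; the real obstacle is the monotonicity $a_j^\ell\le a_{(j+1)/2}^{\ell-1}$ for odd $j$, which is exactly what collapses the absolute values in $\sum_j\abs{b_j^\ell}$. Without it one can still read off the lower bound $\|\Delta^\ell\|\ge 1+\tfrac1{n^2-1}$ by testing $\Delta^\ell$ on $f(x)=-T_n(2x-1)$: this $f$ has sup-norm $1$, is integrated exactly by $U^\ell$, and equals $-1$ at every node of $U^{\ell-1}$, so $\Delta^\ell(f)=\int_0^1 f-U^{\ell-1}(f)=\tfrac1{n^2-1}+1$.
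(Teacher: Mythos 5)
Your architecture is exactly the paper's: determine the signs of the $b_j^\ell$ (positive at the new even-indexed nodes, negative at the retained odd-indexed ones), use twice that the weights of a CC rule sum to $1$ to collapse $\sum_j\abs{b_j^\ell}$ to $2\sum_{j\text{ even}}a_j^\ell$, and evaluate that sum via the vanishing of $\sum_{j\text{ even}}\cos\bigl(2k(j-1)\pi/n\bigr)$. The $\ell\le2$ bookkeeping, the telescoping identity, and the trigonometric evaluation are all correct and complete as you present them; your value $1+\tfrac1{4^{\ell-1}-1}$ is what the paper's own proof derives and what Proposition~\ref{prop:norm} later uses (the ``$4^{\ell-1}+1$'' in the printed statement is a typo). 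The Chebyshev test function $-T_n(2x-1)$ giving the matching lower bound is a nice addition not in the paper.

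The genuine gap is the step you yourself call the real obstacle: $a_{2i-1}^{\ell}<a_i^{\ell-1}$ at the interior retained nodes. A ``short comparison of the two instances of \eqref{eq:a}'' does not go through termwise: with $n=m_\ell-1$ the two formulas carry prefactors $\tfrac1n$ and $\tfrac2n$, so although the cosines $\cos\bigl(2k(j-1)\pi/n\bigr)$ agree for the common range of $k$, you are comparing $\tfrac1n(1-\cdots)$ against $\tfrac2n(1-\cdots)$ plus a tail, and nothing cancels. The paper's actual argument is analytic: using the Fourier series $\tfrac{\pi}{2}\abs{\sin\pi x}=1-2\sum_{k\ge1}\tfrac{\cos 2k\pi x}{4k^2-1}$ and a telescoping tail bound, it shows that $n\,a_{j}^{\ell}$ and $\tfrac n2\,a_i^{\ell-1}$ both lie within $O(1/n)$ of $u=\tfrac{\pi}{2}\abs{\sin\pi x}$ at the shared abscissa, so the claim reduces to a lower bound $u>\tfrac{5}{2(m_{\ell-1}-2)}$, which needs $x$ bounded away from $0$ and $1$ (true, since the endpoints are handled separately) \emph{and} $m_{\ell-1}\ge5$. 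Consequently the refinement from level $2$ to level $3$ falls outside the argument and must be checked by hand ($a_3^3=\tfrac25<\tfrac23=a_2^2$), an edge case your sketch also omits. This quantitative comparison is the bulk of the paper's proof (and of the computation in Petras~\cite{Pe00} you allude to), so it cannot be left as an assertion.
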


\begin{proof}
Recall that the norm of a quadrature rule is given by the 
sum of the absolute values of the used weights. 
Obviously, $\Delta^1$ has only one weight equal to 1, so $\| \Delta^1 \|=1$.
For $\Delta^2$ it is an easy computation to check that there are 
three nodes with weights 
$b_1^2 = \frac16, b_2^2 = -\frac13, b_3^2=\frac16$, which
gives 
\[
\| \Delta^2 \| \;=\; \sum_{j=1}^3 |b_j^1| \;=\; \frac23.
\]
We now treat the case $\ell>2$. Since we want to sum up the absolute values of 
the weights $b_j^\ell$ we first consider their signs. 
Clearly, $b_{j}^\ell=a_j^\ell>0$ for even $j$. 

For odd $j$ we now show that $b_{j}^\ell<0$ for all $\ell\ge3$ and $j\le m_\ell$.
That is, we show that 
\[
a_{2j-1}^{\ell+1} < a_j^{\ell}
\]
for $\ell\ge2$ and (not necessarily odd) $j=1,\dots,m_{\ell}$. 
For $j=1,m_{\ell}$,
noting that $m_{\ell+1}=2 m_{\ell}-1$,
this follows immediately from the formulas for the weights.
So assume that $2\le j \le m_{\ell}-1$.

For $\ell=2$, there is only the case $j=2$ left.
Direct computation shows 
\[ 
a_3^3=\frac25 < \frac23 = a_2^2.
\]

For $\ell\ge 3$, we use the absolutely convergent Fourier series
\[
u(x) \;:=\; \frac{\pi}{2} |\sin \pi x| \;=\; 
1 - 2 \sum_{k=1}^\infty \frac{\cos 2kx\pi}{4k^2-1} . 
\]
Using this for $x=\frac{j-1}{m_\ell-1}$ and the weight formula \eqref{eq:a} 
and abbreviating $u=u(x)$ we obtain
\[\begin{split}
(m_\ell-1) a_j^\ell - u \;&=\;  
- \frac{\cos (j-1)\pi}{(m_\ell-1)^2-1}  + 2\sum_{k=\frac{m_\ell-1}{2}}^{\infty}  
	\frac{ \cos \frac{2k(j-1) \pi}{m_\ell-1}}{4 k^2 - 1} \\
\;&=\;
\frac{\cos (j-1)\pi}{(m_\ell-1)^2-1}  + 2\sum_{k=\frac{m_\ell+1}{2}}^{\infty}  \frac{ \cos \frac{2k(j-1) \pi}{m_\ell-1}}{4 k^2 - 1.}
\end{split}\]
Note that
\[
\abs{2\, \sum_{k=\frac{m_\ell+1}{2}}^{\infty} 
					\frac{ \cos 2k \pi x}{4 k^2 - 1}}
\;\le\; \sum_{k=\frac{m_\ell+1}{2}}^{\infty} \frac{2}{4 k^2 - 1}
\;=\; \sum_{k=\frac{m_\ell+1}{2}}^{\infty} \Bigl(\frac{1}{2k - 1}
					- \frac{1}{2k + 1} \Bigr)
\;=\; \frac{1}{m_\ell}
\]
for all $x\in\R$. This implies
\[
\abs{(m_\ell-1)\, a_j^\ell - u} 
\;<\; \frac{1}{(m_\ell-1)^2-1} + \frac{1}{m_\ell}
\;=\; \frac{m_\ell-1}{m_\ell(m_\ell - 2)}  
\;<\; \frac{1}{m_\ell - 2} 
\]
and thus
\begin{equation} \label{eq:aju}
(m_\ell-1)\, a_j^\ell \;>\; u - \frac{1}{m_\ell-2}. 
\end{equation}
Similarly, noting that $x=\frac{j-1}{m_\ell-1}=\frac{2j-2}{m_{\ell+1}-1}$, 
we obtain that
\begin{equation} \label{eq:a2ju}
2 (m_\ell-1)\, a_{2j-1}^{\ell+1}
\;=\; (m_{\ell+1}-1)\, a_{2j-1}^{\ell+1} 
 \;<\; u + \frac{1}{m_{\ell+1}-2} 
 \;=\; u + \frac{1}{2 m_\ell-3} 
 \;<\; u + \frac{1}{2 (m_\ell-2)} . 
\end{equation}
Recall that our aim is to show that $a_{2j-1}^{\ell+1} < a_j^{\ell}$ which,
thanks to \eqref{eq:aju} and \eqref{eq:a2ju}, is certainly satisfied if
\[ 
u  \;>\; \frac{5}{2(m_\ell-2)}. 
\]
Using $m_\ell\ge 5$ and $\sin x \ge \frac{2\sqrt{2}x}{\pi}$ for $x \in[0,\pi/4]$,
we can conclude this from
\[
u \;=\; \frac{\pi}{2} \left|\sin \frac{(j-1)\pi}{m_\ell-1} \right| 
\;\ge\; \frac{\pi}{2} \sin \frac{\pi}{m_\ell-1} 
\;\ge\; \frac{\pi\sqrt{2}}{m_\ell-1}
\;>\; \frac{5}{m_\ell-1}
\;>\; \frac{5}{2(m_\ell-2)}. 
\]

This finally shows $a_{2j-1}^{\ell+1} < a_j^{\ell}$
for all $\ell\ge2$ and $j=1,\dots,m_\ell$ and, consequently, that 
$b_{j}^{\ell}<0$ for all $\ell\ge3$ and all odd $j$.

Now we can compute the norm
\[
\| \Delta^\ell \| \;=\; \sum_{j=1}^{m_\ell} |b_j^{\ell}| 
\;=\; \sum_{j=1}^{\frac{m_\ell-1}{2}} a_{2j}^{\ell} 
		- \sum_{j=1}^{\frac{m_\ell+1}{2}} a_{2j-1}^{\ell} 
		+ \sum_{j=1}^{m_{\ell-1}} a_{j}^{\ell-1} . 
\]
Using twice that the weights of one CC-rule add up to 1 we obtain
\[
\| \Delta^\ell \| = 2 \sum_{j=1}^{\frac{m_\ell-1}{2}} a_{2j}^{\ell} = 
2  \sum_{j=1}^{2^{\ell-2}}  \frac{1}{2^{\ell-1}} 
\left( 1 + \frac{1}{4^{\ell-1}-1}  - 
2\sum_{k=1}^{2^{\ell-2}-1} \frac{ \cos \frac{2k(2j-1) 
\pi}{2^{\ell-1}}}{4 k^2 - 1}\right). 
\]
Simplifying and changing the order of summation yields
$$ 
\| \Delta^\ell \| =  1 + \frac{1}{4^{\ell-1}-1} 
+ \frac{1}{2^{\ell-3}} \sum_{k=1}^{2^{\ell-2}-1} \frac{1}{4 k^2 - 1} 
\sum_{j=1}^{2^{\ell-2}}  \cos \frac{k(2j-1) \pi}{2^{\ell-2}}.
$$
Since the inner sum is always zero, we finally arrive at
$$ 
\| \Delta^\ell \| =  1 + \frac{1}{4^{\ell-1}-1.}
$$
\end{proof}

Now we are able to prove our explicit bound on the norm 
of the Smolyak algorithm. 

\begin{prop} \label{prop:norm}
For every $k\in\N_0$ and $d\in\N$ we have
\[
\norm{A(d+k,d)} \;\le\; \binom{d+k}{d} \;\le\; e^k\,\Bigl(1+\frac{d}{k}\Bigr)^k.
\]
\end{prop}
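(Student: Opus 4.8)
The plan is to reduce the bound on the operator norm of $A(d+k,d)$ to a purely combinatorial inequality. Since the quantity $\norm{Q}$ — the sum of the absolute values of the weights of a cubature rule $Q$ — is subadditive and multiplicative under tensor products, applying both facts to $A(d+k,d)=\sum_{i\in\N_0^d,\,|i|\le k}\Delta^{i_1+1}\otimes\dots\otimes\Delta^{i_d+1}$ gives at once
\[
\norm{A(d+k,d)}\;\le\;\sum_{\substack{i\in\N_0^d\\ |i|\le k}}\ \prod_{j=1}^d\norm{\Delta^{i_j+1}}.
\]
Writing $D_n:=\norm{\Delta^{n+1}}$, Lemma~\ref{lemma:Delta} gives $D_0=1$, $D_1=\tfrac23$ and $D_n=1+\tfrac1{4^n+1}$ for $n\ge 2$. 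The feature that makes everything work is that $D_n\le 1$ precisely for $n\in\{0,1\}$, with the single small value $D_1=\tfrac23$, whereas the excesses for $n\ge2$ are summable and tiny: $\sum_{n\ge2}(D_n-1)=\sum_{n\ge2}\tfrac1{4^n+1}<\sum_{n\ge2}4^{-n}=\tfrac1{12}<\tfrac13$.

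Next I would prove, by induction on $d$, that $T_d(k):=\sum_{i\in\N_0^d,\,|i|\le k}\prod_{j=1}^dD_{i_j}\le\binom{d+k}{d}$; note that $\binom{d+k}{d}$ is exactly the \emph{number} of multi-indices occurring in the sum, so this says that the products $\prod_jD_{i_j}$ are at most $1$ on average. The case $d=0$ (and $k=0$) is trivial. For the inductive step, peel off the last coordinate, $T_d(k)=\sum_{n=0}^kD_n\,T_{d-1}(k-n)$, apply the inductive hypothesis, and use the hockey‑stick identity $\sum_{n=0}^k\binom{d-1+k-n}{d-1}=\binom{d+k}{d}$ to obtain
\[
T_d(k)\;\le\;\binom{d+k}{d}\;+\;\sum_{n=0}^k(D_n-1)\binom{d-1+k-n}{d-1}.
\]
In the correction sum the $n=0$ term vanishes, the $n=1$ term equals $-\tfrac13\binom{d+k-2}{d-1}$, and for $n\ge2$ the coefficients $\binom{d-1+k-n}{d-1}$ are nonincreasing in $n$ and hence bounded by $\binom{d+k-2}{d-1}$; therefore the correction sum is at most $\binom{d+k-2}{d-1}\bigl(-\tfrac13+\sum_{n\ge2}(D_n-1)\bigr)<0$. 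This yields the first inequality $\norm{A(d+k,d)}\le\binom{d+k}{d}$.

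The second inequality is elementary: $\binom{d+k}{d}=\binom{d+k}{k}=\frac{(d+k)(d+k-1)\cdots(d+1)}{k!}\le\frac{(d+k)^k}{k!}$, and $k!\ge(k/e)^k$ because $e^k\ge k^k/k!$, so $\binom{d+k}{d}\le e^k\,((d+k)/k)^k=e^k(1+d/k)^k$. The one point that needs any care is the correction sum in the induction: one must make sure that the single negative contribution coming from $D_1=\tfrac23$ outweighs the sum of the numerous but individually tiny positive contributions coming from $D_n>1$, $n\ge2$. Monotonicity of $\binom{d-1+k-n}{d-1}$ in $n$ is exactly what reduces this to the scalar inequality $\sum_{n\ge2}\tfrac1{4^n+1}<\tfrac13$; everything else — subadditivity and multiplicativity of the weight‑sum, the hockey‑stick identity, and the bound $k!\ge(k/e)^k$ — is routine.
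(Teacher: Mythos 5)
Your proof is correct and follows essentially the same route as the paper: induction on the dimension, the hockey-stick identity, and the crucial cancellation $-\tfrac13+\sum_{\ell\ge3}(\|\Delta^\ell\|-1)\le 0$ coming from $\|\Delta^2\|=\tfrac23$. The only (harmless, arguably cleaner) organizational difference is that you induct on the explicit combinatorial majorant $\sum_{|i|\le k}\prod_j\|\Delta^{i_j+1}\|$ and push the monotonicity onto the binomial coefficients $\binom{d-1+k-n}{d-1}$, whereas the paper inducts on $M(k,d)=\max_{0\le\ell\le k}\norm{A(d+\ell,d)}$ to the same effect.
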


\begin{proof}
The second inequality is proven by
\[
\binom{d+k}{d} \;\le\; \frac{(d+k)^k}{k!} \;\le\; \left(\frac{e(d+k)}{k}\right)^k,
\]
where we used Stirling's approximation of the factorial.

To prove the first inequality, we show that 
\[
M(k,d) \;:=\; \max_{0\le\ell\le k}\, \norm{A(d+\ell,d)} \;\le\; \binom{d+k}{d}
\] 
by induction on $d$. 
For $d=1$ we obviously have 
\[ 
M(k,1) \;:=\; \norm{A(1+k,1)} =\norm{U^{k+1}} = 1 \le  \binom{1+k}{1}
\]
for every $k\in\N_0$. 
For $d\ge1$ let us now assume $M(k,d) \le \binom{d+k}{d}$ for all $k\in\N_0$ 
and recall that, for $d,k\in\N_0$, we have
\[
\sum_{\ell=0}^k\binom{d+\ell}{d}=\binom{d+k+1}{d+1}.
\]
Then we obtain by Lemma~\ref{lemma:Delta} that
\[\begin{split}
\|A(d+1+k,d+1)\|
\;&\le\; \sum_{\ell=1}^{k+1}\, \|A(d+1+k-\ell,d)\| \cdot \|\Delta^\ell\| \\
\;&\le\; \sum_{\ell=1}^{k+1}\, M(k+1-\ell,d)\, \|\Delta^\ell\|\\
\;&=\; M(k,d) \,+\, \frac23\, M(k-1,d) 
	+ \sum_{\ell=3}^{k+1}\, 
	\frac{4^{\ell-1}}{4^{\ell-1}-1}\; M(k+1-\ell,d) \\
\;&=\;  \sum_{\ell=1}^{k+1}\, M(k+1-\ell,d)
							\,-\, \frac13 M(k-1,d) \\
	&\qquad\qquad\qquad + \sum_{\ell=3}^{k+1}\, 
	\frac{1}{4^{\ell-1}-1}\; M(k+1-\ell,d) \\
\;&\le\;  \sum_{\ell=0}^{k}\, M(\ell,d)
\,-\, M(k-1,d)\, \left(\frac13 - \sum_{\ell=3}^{k+1}\, 
\frac{1}{4^{\ell-1}-1}\right), \\
\end{split}\]
where we have used that $M(k+1-\ell,d)\le M(k-1,d)$ for $k=3,\dots,k+1$.
Noting that 
\[
\sum_{\ell=3}^{k+1} \frac{1}{4^{\ell-1}-1} 
= \sum_{\ell=0}^{k-2} \frac{1}{4^{\ell+2}-1} 
\le \sum_{\ell=0}^{k-2} \frac{1}{4^{\ell+1}} \le \frac13,
\] 
we finally obtain
\[\begin{split}
\|A(d+1+k,d+1)\| \;&\le\; \sum_{\ell=0}^{k}\, M(\ell,d) 
\;\le\; \sum_{\ell=0}^{k}\, \binom{d+\ell}{d} \;=\; \binom{d+1+k}{d+1}  . 
\end{split}\]
Writing this inequality down with $k$ replaced by $\ell$ and 
taking the maximum over $\ell=0,1\dots,k$ on both sides 
leads to
\[
 M(k,d+1) \le \max_{0\le \ell\le k} \binom{d+1+\ell}{d+1} = \binom{d+1+k}{d+1}
\]
and concludes the induction step and the proof.\\
\end{proof}

Note that Proposition~1 holds also for the Smolyak algorithm that is based 
on other one-dimensional quadrature rules as long as 
\[ 
\|\Delta^1\| \;\le\; 1 \quad\text{ and }\quad 
\sum_{\ell=2}^\infty(\|\Delta^\ell\|-1) \;\le\; 0.
\]

To conclude our main result, Theorem~\ref{thm:main}, we additionally need 
a bound on the error of approximation by polynomials. 
We prove that $d$-dimensional $C^\infty$ functions with directional 
derivatives of all orders bounded by one can be arbitrarily well 
approximated by polynomials of total degree of order $\sqrt{d}$. 
This result was already proven by the authors and H. Wo\'zniakowski 
in \cite{HNUW13}, but we state the proof here again for completeness. 
Let 
$\mathcal{P}_k$ be the space of polynomials of degree $k$.

\begin{prop} \label{prop:poly}
Let $f\in F_d$ and $k\in\N$. Then
\[
\inf_{p\in\mathcal{P}_{k-1}}\|f-p\|_\infty 
\;\le\; \sqrt{\frac{1}{2\pi k}}\; \biggl( \frac{e \sqrt{d}}{2 k} \biggr)^k.
\]
\end{prop}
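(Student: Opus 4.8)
The plan is to reduce the $d$-dimensional approximation problem to a one-dimensional one by exploiting the hypothesis that \emph{directional} derivatives of all orders are bounded by $1$. Fix $f\in F_d$ and a point $x\in[0,1]^d$. For a fixed direction $\theta\in S^{d-1}$ and a base point $x_0$, consider the univariate function $g(t):=f(x_0+t\theta)$ on an appropriate interval of $t$; by the definition of $F_d$ we have $|g^{(k)}(t)|=|D^k_\theta f(x_0+t\theta)|\le 1$ for all $k$ and all admissible $t$. So the restriction of $f$ to any line segment inside the cube is a univariate $C^\infty$ function with all derivatives bounded by $1$.

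The second step is the classical one-dimensional fact: a univariate function $g$ on an interval of length $L$ with $\|g^{(k)}\|_\infty\le 1$ for all $k$ is approximated by its Taylor polynomial of degree $k-1$ about the midpoint (or an endpoint) with error at most $\frac{(L/2)^k}{k!}$ (resp. $\frac{L^k}{k!}$), by the Lagrange form of the remainder. Here the relevant segments through $[0,1]^d$ have Euclidean length at most $\sqrt d$, so along each such segment $f$ is within $\frac{(\sqrt d)^k}{k!}$ of a univariate polynomial of degree $k-1$ in the line parameter. The point is that a polynomial of total degree $k-1$ in the $d$ variables restricts, on every line, to a univariate polynomial of degree $\le k-1$; what we need is the converse-flavored statement that the pointwise/linewise Taylor approximations glue together into a single $d$-variate polynomial of total degree $k-1$.

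To make that gluing precise, I would take $p$ to be the $d$-variate Taylor polynomial of $f$ of total degree $k-1$ about the center $c=(1/2,\dots,1/2)$, i.e. $p(x)=\sum_{|\alpha|\le k-1}\frac{D^\alpha f(c)}{\alpha!}(x-c)^\alpha$. For an arbitrary $x\in[0,1]^d$ write $x=c+t\theta$ with $\theta=(x-c)/\|x-c\|_2\in S^{d-1}$ and $t=\|x-c\|_2\le \sqrt d/2$. Then $g(s):=f(c+s\theta)$ has $s$-th derivative $\sum_{|\alpha|=s}\binom{s}{\alpha}D^\alpha f(c)\,\theta^\alpha$ at $s=0$, so the degree-$(k-1)$ Taylor polynomial of $g$ about $0$ evaluated at $s=t$ is exactly $p(x)$; and $|g^{(k)}(s)|\le1$. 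The Taylor remainder gives $|f(x)-p(x)|=|g(t)-\text{(Taylor poly of }g)|\le \frac{t^k}{k!}\le \frac{(\sqrt d/2)^k}{k!}=\frac{d^{k/2}}{2^k\,k!}$. Finally I invoke Stirling in the form $k!\ge \sqrt{2\pi k}\,(k/e)^k$, which turns the bound into $\frac{d^{k/2}}{2^k}\cdot\frac{1}{\sqrt{2\pi k}}\cdot\frac{e^k}{k^k}=\sqrt{\frac{1}{2\pi k}}\left(\frac{e\sqrt d}{2k}\right)^k$, exactly the claimed estimate, and this bounds $\inf_{p\in\mathcal P_{k-1}}\|f-p\|_\infty$.

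The only subtlety — and the step I would be most careful about — is making sure the Taylor polynomial of the multivariate $f$ and the Taylor polynomials of its one-dimensional restrictions genuinely coincide, so that a \emph{single} polynomial of total degree $k-1$ works simultaneously for every $x$; this is the chain-rule/multinomial identity $g^{(s)}(0)=\sum_{|\alpha|=s}\binom{s}{\alpha}D^\alpha f(c)\theta^\alpha$ together with the observation that $\frac{t^s}{s!}g^{(s)}(0)=\sum_{|\alpha|=s}\frac{D^\alpha f(c)}{\alpha!}(t\theta)^\alpha$. Everything else (the Lagrange remainder bound, Stirling) is routine. One also needs $c+s\theta\in[0,1]^d$ for all $s\in[0,t]$, which holds because the segment from $c$ to $x$ stays in the convex cube.
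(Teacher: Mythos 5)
Your proof is correct and follows essentially the same route as the paper: both take the Taylor polynomial of total degree $k-1$ about the center $(1/2,\dots,1/2)$, bound the remainder by the $k$-th directional derivative along the segment from the center to $x$ times $\|x-x^*\|_2^k/k!\le d^{k/2}/(2^k k!)$, and finish with Stirling. The only cosmetic difference is that you use the Lagrange form of the univariate remainder on the line restriction while the paper uses the integral form of the multivariate remainder; these are equivalent here.
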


\begin{proof}
Consider the Taylor polynomial for $f\in F_d$ of order $k-1$ 
about the point $x^*=(1/2,\dots,1/2)$ which can be written as
$$  
T_{k-1}(x)=\sum_{\ell=0}^{k-1} \frac{f^{(\ell)}(x^*)(x-x^*)^\ell}{\ell!}
\ \ \ \ \mbox{for all}\ \ \ \ x\in [0,1]^d.
$$
Here we use the standard notation 
$A(x^\ell)=A(x,\dots,x)$ for the evaluation of an
$\ell$-linear map on the diagonal.
Note that we consider here $f^{(\ell)}(x^*)$ as an $\ell$-linear map.
It is well-known that the error of the approximation of $f$ by $T_{k-1}$ 
can be written as
$$
f(x)-T_{k-1}(x)\;=\; k\,\int_0^1(1-t)^{k-1}\,
\frac{ f^{(k)} \big(x^*+t(x-x^*)\big)(x-x^*)^{k}}{k!} \dint t,
$$
which implies
\[\begin{split}
\left| f(x)-T_{k-1}(x) \right| \;&\le\;  \frac{1}{(k-1)!}\, 
	\int_0^1(1-t)^{k-1}\, \dint t \, 
	\Bigl(\max_{\theta\in S^{d-1}}\,\|D^k_\theta f\|_\infty\Bigr)\, 
	\norm{x-x^*}_2^{k} \\
&\le\; \frac{1}{2^k\, k!}\, d^{k/2}.
\end{split}\]
An application of Stirling's formula proves the result 
since $T_{k-1}\in\mathcal{P}_{k-1}$.\\
\end{proof}

We combine Propositions~\ref{prop:norm} and \ref{prop:poly} 
to obtain the first part of our main result. 
To ease the calculations we use the trivial bound
$\inf_{p\in\mathcal{P}_{2k+1}}\|f-p\|_\infty\le 
\inf_{p\in\mathcal{P}_{2k-1}}\|f-p\|_\infty$. 
Recall that
\[
e(k,d) \;:=\; 
\sup_{f\in F_d} \abs{A\bigl(d+k,d\bigr)(f) \,-\,  S_d(f)} . 
\]
Using that $A(d+k,d)$ is exact for all polynomials of total 
degree at most $2k+1$, see~\cite{NR96,NR99}, we have
%
\[\begin{split}
e(k,d) \;&\le\; \sup_{f\in F_d} \inf_{p\in\mathcal{P}_{2k+1}}\|f-p\|_\infty\, 
\Bigl(1+\|A(d+k,d)\|\Bigr)\\
&\le\; \sqrt{\frac{1}{4\pi k}}\; 
				\biggl( \frac{e \sqrt{d}}{4k} \biggr)^{2k}\; 
				\biggl(1+e^k\,\Bigl(1+\frac{d}{k}\Bigr)^k \biggr) 
\;\le\; \sqrt{\frac{1}{\pi k}}\; 
\biggl( \frac{e \sqrt{d}}{4k} \biggr)^{2k}\; 
				e^k\,\Bigl(1+\frac{d}{k}\Bigr)^k  \\
&\le\; \frac{1}{\sqrt{\pi k}}\; \biggl( \frac{e^3}{16}\biggr)^k\;
				\biggl( \frac{d}{k^2} 
				\,+\, \frac{d^{2}}{k^3}\biggr)^k 
\;\le\; \frac{1}{\sqrt{\pi k}}\; \biggl( \frac{e^3 d}{8 k^2}\; 
				\max\Bigl\{ 1, \frac{d}{k} \Bigr\}\biggl)^k  . 
\end{split}\]

This shows that with
\[
k_{\eps,d} \;:=\; \biggl\lceil\max\Bigl\{3\,d^{2/3},\; \ln(1/\eps)\Bigr\} \biggr\rceil
\]
we have $e(k_{\eps,d},d)\le\eps$, which is the first part of Theorem~\ref{thm:main}.

It remains to prove the bound on the number of function values 
$N_d(k_{\eps,d})$ that are 
used by the Smolyak algorithm $A(d+k_{\eps,d},d)$. For this we use 
the following bound of Wasilkowski and Wo\'zniakowski.

\begin{lem}[\cite{WW95}, Lemma~7] \label{lemma:number}
The number of function values used by the Smolyak algorithm $A(d+k,d)$ that is 
based on the one-dimensional quadrature rules from \eqref{eq:U} satisfies 
\[
N_d(k) \;\le\; 2 (2e)^k \Bigl( 1+ \frac{d}{k} \Bigr)^k
\]
for all $k,d\in\N$, 
and thus
\[
\ln\bigl(N_d(k)\bigr) \;\le\; \ln2 \,+\, k\,\Bigr(\ln(2e) 
+ \ln\bigl( 1+ d/k \bigr)\Bigr).
\]
\end{lem}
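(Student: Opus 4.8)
The plan is to prove Lemma~\ref{lemma:number}, which is the last statement in the excerpt: a bound on the number $N_d(k)=|H(d+k,d)|$ of points used by the Smolyak algorithm $A(d+k,d)$. Since $H(d+k,d)=\bigcup_{|i|=q}(X^{i_1}\times\dots\times X^{i_d})$ with $q=d+k$, the first step is to bound the cardinality of this union by the sum $\sum_{|i|=d+k}\prod_{r=1}^d m_{i_r}$ of the cardinalities of the individual grids. This is wasteful (the sets overlap heavily because of nesting), but it suffices for the stated bound, and it converts a combinatorial-geometric quantity into a clean sum over multi-indices. It will be more convenient to work with the nested ``difference'' counts: let $\widetilde m_1=m_1=1$ and $\widetilde m_\ell=m_\ell-m_{\ell-1}=2^{\ell-2}$ for $\ell\ge2$ be the number of \emph{new} nodes at level $\ell$, so that $m_\ell=\sum_{j\le\ell}\widetilde m_j$ and $H(d+k,d)=\bigcup_{|i|\le d+k}(\widetilde X^{i_1}\times\dots\times\widetilde X^{i_d})$ as a \emph{disjoint} union, where $\widetilde X^\ell$ is the set of new nodes. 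Hence in fact $N_d(k)=\sum_{i\in\N^d:\,|i|\le d+k}\prod_{r=1}^d\widetilde m_{i_r}$ exactly, which is the identity I would actually use.

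\medskip

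The second step is to estimate this sum. Reindex with $i_r=j_r+1$, $j_r\in\N_0$, so that the constraint becomes $|j|\le k$ and $\prod_r\widetilde m_{i_r}=\prod_r\widetilde m_{j_r+1}$ where $\widetilde m_1=1$ and $\widetilde m_{j+1}=2^{j-1}$ for $j\ge1$. A convenient uniform bound is $\widetilde m_{j+1}\le 2^{j-1}\le 2^j/2$ for $j\ge1$ and $\widetilde m_1=1\le 2^0$, so $\prod_{r}\widetilde m_{j_r+1}\le 2^{|j|}$; then
\[
N_d(k)\;\le\;\sum_{j\in\N_0^d:\,|j|\le k} 2^{|j|}
\;=\;\sum_{s=0}^{k}2^{s}\,\#\{j\in\N_0^d:|j|=s\}
\;=\;\sum_{s=0}^{k}2^{s}\binom{s+d-1}{d-1}.
\]
Now I bound the tail sum. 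Since $2^s\binom{s+d-1}{d-1}$ grows at most geometrically with ratio bounded by $2\cdot\frac{k+d-1}{k}$ on the range $s\le k$, the whole sum is at most a constant (absorbed into the factor $2$) times the top term $2^k\binom{k+d-1}{d-1}$; and $\binom{k+d-1}{d-1}=\binom{k+d-1}{k}\le\frac{(k+d-1)^k}{k!}\le(e(k+d)/k)^k/$ — more simply, $\binom{k+d-1}{k}\le\binom{k+d}{k}\le(1+d/k)^k e^k$ by the Stirling estimate already used in the proof of Proposition~\ref{prop:norm}. Combining $2^k\cdot e^k(1+d/k)^k=(2e)^k(1+d/k)^k$ with the geometric-series constant gives the claimed $N_d(k)\le 2(2e)^k(1+d/k)^k$. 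The logarithmic form then follows by taking $\ln$ and using $\ln((2e)^k(1+d/k)^k)=k(\ln(2e)+\ln(1+d/k))$.

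\medskip

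The main obstacle is purely bookkeeping: getting the constant in front down to exactly $2$ rather than something like $3$ or $4$. The geometric-tail argument must be done a little carefully — one wants $\sum_{s=0}^k 2^s\binom{s+d-1}{d-1}\le 2\cdot 2^k\binom{k+d-1}{d-1}$, which requires showing the partial sums are dominated by twice the last term; this is true because consecutive ratios $\frac{2^{s}\binom{s+d-1}{d-1}}{2^{s-1}\binom{s+d-2}{d-1}}=2\cdot\frac{s+d-1}{s}\ge 2$, so the sequence at least doubles at each step and hence $\sum_{s\le k}(\cdot)\le 2\cdot(\text{last term})$ by comparison with a geometric series of ratio $\ge2$. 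One must also be slightly careful at $k=0$ (where $N_d(0)=1$ and the bound reads $2$, which holds) and when $d$ or $k$ is small, but since the lemma is quoted from \cite{WW95} for all $k,d\in\N$, citing it suffices and the sketch above just indicates why it is true. I would present the exact identity for $N_d(k)$, the $2^{|j|}$ domination, and the geometric collapse of the sum, then invoke the Stirling estimate to finish.
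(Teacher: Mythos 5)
Your proof is correct and lands on exactly the paper's intermediate bound $N_d(k)\le 2^{k+1}\binom{d+k-1}{d-1}$, but it gets there by a more self-contained route: the paper simply cites equation (43) of Wasilkowski--Wo\'zniakowski (with $F_0=1$, $F=2$, $q=d+k$) for that combinatorial count and then applies Stirling, whereas you reprove the count from scratch via the disjoint decomposition of the sparse grid into products of ``new node'' sets $\widetilde X^{i_1}\times\dots\times\widetilde X^{i_d}$, the domination $\prod_r\widetilde m_{j_r+1}\le 2^{|j|}$, and the geometric collapse of $\sum_{s=0}^k 2^s\binom{s+d-1}{d-1}$ onto twice its top term. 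That last step is clean and does deliver the constant $2$ exactly, and your Stirling step coincides with the paper's. The only blemish is a small arithmetic slip in the level counts: with $m_1=1$, $m_2=3$, $m_3=5$, $m_4=9$ one has $\widetilde m_2=\widetilde m_3=2$ and $\widetilde m_\ell=2^{\ell-2}$ only for $\ell\ge3$, so your claim ``$\widetilde m_{j+1}\le 2^{j-1}$ for $j\ge1$'' fails at $j=1$ (where $\widetilde m_2=2>2^0$); fortunately the bound you actually use, $\widetilde m_{j+1}\le 2^{j}$ for all $j\ge0$, does hold (with equality at $j=0,1$), so the argument survives unchanged. What your version buys is independence from the cited lemma; what the paper's version buys is brevity.
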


\begin{proof}
Since our $m_\ell$'s satisfy $m_\ell\le2^\ell -1$ and the used nodes 
are nested, we use equation (43) of \cite{WW95} with $F_0=1$, $F=2$ and 
$q=d+k$. This gives 
\[
N_d(k) \;\le\; 2^{k+1} \binom{d+k-1}{d-1} 
\;\le\; 2 (2e)^k \Bigl( 1+ \frac{d}{k} \Bigr)^k
\]
by Stirling's formula.\\
\end{proof}

From this we obtain
\[\begin{split}
\ln\bigl(N_d(k_{\eps,d})\bigr) \;&\le\; \ln 2 +  k_{\eps,d}\, 
	\biggl( \ln(2e) \\
	&\qquad+ \min\Bigl\{ \ln\bigl( 1+(1/3)\, d^{1/3} \bigr),\, 
		\ln\bigl( 1+ d \ln(1/\eps)^{-1} \bigr) \Bigr\} \biggr) \\
\;&\le\; \ln 2 + k_{\eps,d}\, 
	\biggl( 2 + \ln\biggl( 1- \frac{d}{\ln(\eps)} \biggr) \biggr),
\end{split}\]
which is the second part of Theorem~\ref{thm:main}.

The proof of Theorem \ref{thm:qmc} proceeds similarly, so we only sketch the
necessary modifications. 
It follows from \cite{Dav67} that for all $k,d\in\N$ there exists
a cubature formula $Q(k,d)$ with positive weights, exactness $k$ and 
such that the number of function values $N^*_d(k)$ satisfies
the Tchakalov bound
\begin{equation} \label{eq:Tschakk}
 N^*_d(k) \le \binom{d+k}{d} \;\le\;  e^k \Bigl( 1+ \frac{d}{k} \Bigr)^k.
\end{equation}
This implies that $\norm{Q(k,d)}=1$ which can now be used instead of
Proposition \ref{prop:norm}. So, we obtain for the error 
$$
e(k,d) \;\le\; \sup_{f\in F_d} \inf_{p\in\mathcal{P}_{k}}\|f-p\|_\infty\, 
\Bigl(1+\|Q(k,d)\|\Bigr)
\le\; \sqrt{\frac{2}{\pi k}}\; \biggl( \frac{e \sqrt{d}}{2 k} \biggr)^k,
$$
which is smaller than $\eps$ for $k=k^*_{\eps,d}$ as in Theorem \ref{thm:qmc}.
Finally, the bound on $N^*_d(k^*_{\eps,d})$ in 
Theorem \ref{thm:qmc} now follows from \eqref{eq:Tschakk}.

%
%
%



\end{document}